\documentclass{article}
\usepackage{amsthm}
\usepackage{fullpage}
\newtheorem{theorem}{Theorem}
\newtheorem{corollary}[theorem]{Corollary}

\newtheorem{conjecture}[theorem]{Conjecture}
\newtheorem{lemma}[theorem]{Lemma}
\newtheorem{clm}{Claim}
\newtheorem*{theoremA}{Theorem A}
\newtheorem*{theoremB}{Theorem B}
\newtheorem*{theorem5A}{Theorem 5(a)}
\newtheorem*{theorem5B}{Theorem 5(b)}
\def\mad{\textrm{mad}}
\def\C{C}

\begin{document}
\title{Injective colorings of sparse graphs}
\author{
Daniel W. Cranston\thanks{
\texttt{dcranston@vcu.edu},
Virginia Commonwealth University, Richmond, Virginia; DIMACS, Rutgers University, Piscataway, New Jersey.
} \and 
Seog-Jin Kim\thanks{
\texttt{skim12@konkuk.ac.kr},
Konkuk University, Seoul, Korea.  Research was supported by Basic Science Research Program
through the National Research Foundation of Korea(NRF) funded
by the Ministry of Education, Science and Technology(2009-0064177)} \and 
Gexin Yu\thanks{
\texttt{gyu@wm.edu}, 
College of William and Mary, Willliamsburg, VA 23185.
Research is partially supported by NSF grant DMS-0852452.
}}

\date{\today}
\maketitle

\begin{abstract}
Let $\mad(G)$ denote the maximum average degree (over all subgraphs) of $G$ and let $\chi_i(G)$ denote the injective chromatic number of $G$.  We prove that if $\mad(G) \leq \frac{5}{2}$, then $\chi_i(G)\leq\Delta(G) + 1$; and 
if $\mad(G) < \frac{42}{19}$, then $\chi_i(G)=\Delta(G)$.
Suppose that $G$ is a planar graph with girth $g(G)$ and $\Delta(G)\geq 4$.  
We prove that if $g(G)\geq 9$, then $\chi_i(G)\leq\Delta(G)+1$; similarly, 
if $g(G)\geq 13$, then $\chi_i(G)=\Delta(G)$.
\end{abstract}

{\bf Keywords:} injective coloring, maximum average degree, planar graph
\smallskip

{\bf MSC:} 05C15

\section{Introduction}
An \textit{injective coloring} of a graph $G$ is an assignment of colors to the vertices of $G$ so that any two vertices with a common neighbor receive distinct colors.  The \textit{injective chromatic number}, $\chi_i(G)$, is the minimum number of colors needed for an injective coloring.   Injective colorings have their origin in complexity theory~\cite{HKSS02}, and can be used in coding theory.  

Note that an injective coloring is not necessarily proper, and in fact,  $\chi_i(G) = \chi(G^{(2)})$, where the \textit{neighboring graph $G^{(2)}$} is defined by $V(G^{(2)})=V(G)$ and $E(G^{(2)})=\{uv:\mbox{$u$ and $v$ have a common}$ $\mbox{neighbor in $G$}\}$.  It is clear that $\Delta\leq\chi_i(G)\leq\Delta^2-\Delta+1$, where $\Delta$ is the maximum degree of graph $G$.  Graphs attaining the upper bound were characterized in~\cite{HKSS02}, and it was also shown that for every fixed $k\ge 3$ the problem of determining if a graph is injective $k$-colorable is NP-complete.

As one can see,  injective coloring is a close relative of the coloring of square of graphs and of $L(2,1)$-labeling, which have both been studied extensively.   Upper bounds on $\chi(G^2)$ and on the $L(2,1)$-labeling number $\lambda(G)$  are both upper bounds on $\chi_i(G)$.

Alon and Mohar~\cite{AM02} showed that if $G$ has girth at least $7$, then $\chi(G^2)$ could be as large as $\frac{c\Delta^2}{\log \Delta}$ (for some constant $c$), but not larger.  This gives an upper bound on $\chi_i(G)$ when graph $G$ has girth at least $7$.    The study of $\chi(G^2)$ has been largely focused on the the well-known Wenger's Conjecture~\cite{W77}.

\begin{conjecture}[Wenger~\cite{W77}]
If $G$ is a planar graph with maximum degree $\Delta$, then 
$\chi(G^2)\le 7$ when $\Delta=3$,  $\chi(G^2)\le \Delta+5$ when $4\le \Delta\le 7$, and $\chi(G^2)\le 3\Delta/2+1$ when $\Delta\ge 8$. 
\end{conjecture}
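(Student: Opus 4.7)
\bigskip
\noindent\textbf{Proof proposal.} The plan is to attack the conjecture via the discharging method applied to a hypothetical minimum counterexample. Let $G$ be a planar graph minimizing $|V(G)|+|E(G)|$ among planar graphs violating the claimed bound for $\chi(G^2)$ at its value of $\Delta$. First one verifies the usual minimality consequences: $G$ is $2$-connected, and no vertex has a neighborhood whose removal admits an easy extension of a coloring of a smaller subgraph via greedy or Hall-type arguments on $G^2$. This reduces the problem to the local structure around low-degree vertices and short faces.

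\bigskip
The heart of the argument is compiling a list of \emph{reducible configurations}. A configuration $H \subseteq G$ is reducible if, whenever $G$ contains $H$, one can delete some vertices or edges, color the remainder by minimality, and extend the coloring to $G$ by exploiting the freedom guaranteed by the target bound. Candidate configurations include $3$-vertices incident to faces of length at most $5$, short paths of $3$- and $4$-vertices along short faces, and \emph{$2$-alternators} in which high-degree vertices have too many low-degree neighbors. After establishing reducibility, one assigns initial charges $d(v)-4$ to each vertex and $\ell(f)-4$ to each face, so that Euler's formula makes the total charge $-8$. Discharging rules then move charge from positive elements (high-degree vertices, long faces) to negative elements (low-degree vertices, short faces), calibrated so that the absence of the listed reducible configurations forces every element into nonnegative final charge, a contradiction.

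\bigskip
The main obstacle is the case $\Delta \geq 8$ with the sharp target $3\Delta/2 + 1$. Here the conjectured extremal examples, built from $K_4$-prisms and related planar gadgets, leave essentially no slack, and standard reducibility arguments --- even when augmented with sophisticated local analysis --- tend to produce bounds closer to $5\Delta/3 + O(1)$, which matches what the existing literature achieves. Closing this last gap appears to require either a genuinely global argument (for instance, one based on planar separators or a clever partition of $V(G^2)$ into classes of bounded chromatic number) or a very delicate combinatorial analysis of the second neighborhood of a vertex of maximum degree. The $\Delta = 3$ case and the intermediate range $4 \leq \Delta \leq 7$ appear tractable by careful discharging, but the $\Delta \geq 8$ regime seems to demand a new idea, which is likely why the conjecture has remained open.
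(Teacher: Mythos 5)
This statement is a \emph{conjecture} (Wegner's conjecture, cited from \cite{W77}); the paper does not prove it and offers no argument for it, so there is no proof of record to compare yours against. That said, your submission is not a proof either: it is an outline of a strategy, and it contains no completed step. You never exhibit a single concrete reducible configuration, never carry out a reducibility argument (which colors are forbidden, why the extension succeeds), and never verify that your discharging rules actually make every vertex and face nonnegative. The phrases ``candidate configurations include'' and ``calibrated so that'' are placeholders for the entire content of such a proof.

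More importantly, you concede the central gap yourself: for $\Delta\ge 8$ the target $3\Delta/2+1$ is tight against the known extremal examples, and you state that the discharging framework you describe only yields bounds near $5\Delta/3+O(1)$, then remark that closing the gap ``seems to demand a new idea.'' That is an accurate assessment of the state of the art, but it means the proposal does not prove the statement --- it explains why the statement is hard. Even for the allegedly ``tractable'' ranges $\Delta=3$ and $4\le\Delta\le 7$, no argument is actually given; asserting that careful discharging should work is not the same as doing it, and in fact these cases also remain open in full generality. If you want to contribute something verifiable in the spirit of this paper, the productive direction is the one the paper takes: prove weaker bounds ($\chi(G^2)\le\Delta+c$ or $\chi_i(G)\le\Delta+c$) under additional sparseness hypotheses such as girth or maximum average degree, where the reducible configurations and discharging can be written down and checked in full.
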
 
\noindent
This conjecture and its variations have been studied extensively; see Borodin and Ivanova~\cite{BI09} or Dvo\v{r}\'ak, Kr\'al, Nejedl\'y, and \v{S}krekovski ~\cite{DKNS08} for a good survey.

Much effort has been spent on finding graphs with low injective chromatic numbers, namely graphs with $\chi_i(G)\le \Delta+c$, for some small constant $c$.   In Theorems~\ref{planar-girth} and~\ref{dhrtheorem}, we list some of the most recent related results. 

\begin{theorem}\label{planar-girth}
Let $G$ be a planar graph with maximum degree $\Delta(G)\ge D$ and girth $g(G)\ge g$.  Then

(a)  {\rm (Borodin, Ivanova, Neustroeva~\cite{BIN07})}  if 
$(D, g)\in \{(3, 24), (4, 15), (5, 13), (6, 12), (7, 11), (9, 10), (15, 8), (30, 7)\}$, then $\chi_i(G)\le \chi(G^2)=\Delta+1$.

(b)  {\rm (Borodin and Ivanova~\cite{BI09})} $\chi_i(G)\le \chi(G^2)\le \Delta+2$ if $(D, g)=(36, 6)$. 

(c)  {\rm (Luzar, Skrekovski, and Tancer~\cite{lst})} $\chi_i(G)\le \Delta+4$ if $g=5$; $\chi_i(G)\le \Delta+1$ if $g=10$; and $\chi_i(G)=\Delta$ if $g=19$.

(d) {\rm (Bu, Chen, Raspaud, and Wang~\cite{BCRW09})} $\chi_i(G)\le \Delta+2$ if $g=8$; $\chi_i(G)\le \Delta+1$ if $g=11$; and $\chi_i(G)=\Delta$ if $(D, g)\in \{(3, 20), (71, 7)\}$. 
\end{theorem}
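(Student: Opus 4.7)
This theorem collects four results proved in four different papers, so a single uniform proof is impossible; instead I will describe the common approach that underlies all of them, specialising to a fixed pair $(D,g)$ from the statement together with the asserted bound on $\chi_i(G)$. Let $G$ be a vertex-minimum counterexample: a planar graph with $\Delta(G)\ge D$, girth at least $g$, and $\chi_i(G)$ exceeding the asserted bound, chosen with the fewest vertices. The plan has two halves. First, prove that $G$ contains no subgraph from a carefully chosen list of \emph{reducible configurations}. Second, use a discharging argument to show that, under the girth hypothesis, $G$ must contain at least one such configuration. These two conclusions contradict each other, ruling out a counterexample.

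For the first half, each reducible configuration is a small subgraph $H$ — typically a low-degree vertex, a path of degree-$2$ vertices, or a short tree whose leaves have prescribed small degrees. I would delete or contract $H$, use minimality to obtain an injective coloring of the smaller graph with the allowed palette, and then extend back to the vertices of $H$. In an injective coloring the colors forbidden at a vertex $v$ are those used on vertices sharing a neighbor with $v$, so $v$ has at most $\sum_{u\in N(v)}(d(u)-1)$ forbidden colors. Extensions therefore succeed when $v$ itself or its neighbors are of small degree; more delicate extension arguments (Hall's theorem, recolorings along short alternating paths, or exploiting a vertex of degree strictly less than $\Delta$ inside $H$) handle slightly richer configurations at the cost of heavier case analysis.

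For the second half I would assign initial charges $\mu(v)$ and $\mu(f)$ to vertices and faces so that Euler's formula yields a fixed negative total charge, while faces of length at least $g$ and vertices of moderate degree start with non-negative charge. The discharging rules, typically of the form ``each long face sends a specified amount of charge to each incident vertex of prescribed low degree,'' redistribute charge without changing the total. The bulk of the work is to check, under the assumption that no configuration from the reducible list is present, that every vertex and every face ends with non-negative charge; this contradicts the negative total and completes the proof.

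The main obstacle, and the reason these four $(D,g)$-pairs merited separate papers, is the calibration between the two halves: enlarging the reducible list relaxes the girth hypothesis needed for discharging, but each new configuration demands its own extension argument and some configurations become irreducible once the slack in the palette is used up. Parts (c) and (d) aim at the tight bound $\chi_i(G)=\Delta$, where the extension step has no spare color, so the reducible configurations must be substantially more restrictive and the required girth correspondingly larger ($g=19$ in (c), $g=20$ in (d)). Matching the discharging parameters precisely to the girth bound in each case is the most delicate bookkeeping and essentially dictates the exact thresholds appearing in the statement.
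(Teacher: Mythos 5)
This statement is a survey theorem: the paper offers no proof of it, but simply cites the four external papers (Borodin--Ivanova--Neustroeva, Borodin--Ivanova, Lu\v{z}ar--\v{S}krekovski--Tancer, and Bu--Chen--Raspaud--Wang) in which the individual parts are established. So there is no ``paper's own proof'' to compare against, and your write-up should be judged on whether it would independently establish the claims.

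It does not. What you have written is an accurate description of the discharging framework that all four cited papers (and indeed this paper's own Sections 2--4) employ, but it contains none of the content that actually constitutes a proof for any single pair $(D,g)$ in the statement. A proof of, say, the $(9,10)$ case of part (a) or the $g=11$ case of part (d) requires (i) an explicit finite list of reducible configurations, (ii) for each one, a concrete extension argument showing the coloring of $G-H$ extends to $H$ with the stated palette (this is where the hypotheses $\Delta\ge D$ and the exact girth threshold enter, and where the $+1$, $+2$, or $+0$ slack in the palette is consumed), and (iii) explicit charges and rules together with a case-by-case verification that every vertex and face ends nonnegative. Your sketch names these three ingredients but supplies none of them, and the thresholds in the statement are precisely the output of that omitted bookkeeping --- they cannot be recovered from the framework alone. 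The honest conclusion is that these results must simply be cited, as the paper does; alternatively, a genuine proof of even one entry would be a multi-page argument of the kind carried out in Sections 2--4 for the paper's own theorems.
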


Instead of studying planar graphs with high girth, some researchers consider graphs with  bounded {\it maximum average degree}, $\mad(G)$, where the average is taken over all subgraphs of $G$.  Note that every planar graph  $G$ with girth at least $g$ satisfies $\mad(G)<\frac{2g}{g-2}$.   Below are some results in terms of $\mad(G)$.

\begin{theorem}\label{dhrtheorem}
Let $G$ be a graph with maximum degree $\Delta(G)\ge D$ and $\mad(G)<m$. Then

(a) {\rm (Doyon, Hahn, and Raspaud~\cite{dhr})}  $\chi_i(G)\leq\Delta+8$ if $m=10/3$;  $\chi_i(G)\leq\Delta+4$ if $m=3$; and $\chi_i(G)\leq\Delta+3$ if $m=14/5$. 

\smallskip
(b) {\rm (Cranston, Kim, and Yu~\cite{CKY0000b})} $\chi_i(G)\le \Delta+2$ if $(D, m)=(4, 14/5)$; $\chi_i(G)\le 5$ if $(D, m)=(3, 36/13)$ and $m=36/13$ is sharp. 
\end{theorem}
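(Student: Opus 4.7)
My plan is to prove the results of Theorem~\ref{dhrtheorem} using the now-standard ``minimum counterexample + discharging'' technique that undergirds essentially every $\mad$-based coloring bound. I would focus on whichever claim in the list has the tightest palette (for instance, the $(D,m) = (4, 14/5)$ bound with palette $\Delta+2$), since if the method works there, the other cases tend to follow from strictly easier versions of the same argument. I would assume for contradiction that a graph $G$ with $\Delta(G) \geq D$ and $\mad(G) < m$ fails to be injectively $(\Delta+k)$-colorable, and choose $G$ minimal with respect to $|V(G)| + |E(G)|$.

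The first major step is to compile a list of \emph{reducible configurations}: local subgraphs that $G$ cannot contain. Natural candidates are a vertex of degree $\leq 1$, two adjacent $2$-vertices, a $2$-vertex both of whose neighbors are low-degree, or a short path of $2$-vertices incident to low-degree endpoints. For each candidate configuration $H$, I would delete a carefully chosen vertex or edge of $H$, color the resulting smaller graph by the inductive hypothesis, and extend the injective coloring back to $G$. The extension analysis amounts to counting, at each uncolored vertex $v$, how many colors are \emph{forbidden} --- namely those already used on $N_G(N_G(v))\setminus\{v\}$ --- and checking that a palette of $\Delta+k$ colors leaves a free color, possibly after a small recoloring swap at a neighboring already-colored vertex.

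Having fixed the reducible set, the second step is discharging. Each vertex $v$ receives initial charge $\mu(v) = d_G(v) - m$; by the $\mad$ hypothesis the total initial charge is strictly negative. One then designs local redistribution rules (typically, high-degree vertices send fixed amounts of charge to nearby low-degree vertices) so that the final charge $\mu^*(v)$ is nonnegative at every vertex. The absence of the reducible configurations is exactly what forces every low-degree vertex to have enough high-degree support to absorb charge up to $0$. The contradiction with the initial total finishes the argument.

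The main obstacle is the tight coupling between palette size and reducibility: a smaller palette $\Delta+k$ forces the reducible list to be richer, which in turn drives a more intricate discharging scheme with more rules and more local cases. For the $\Delta+2$ claim at $\Delta\geq 4$, extending across an induced path of $2$-vertices is already delicate, since only two spare colors are available per extension step and the two ends of the path interact through second-neighbor constraints. I would expect the heart of the work to lie in isolating precisely which path- and star-configurations of $2$-vertices are reducible under the tight palette, and then confirming that excluding them lets a one- or two-rule discharging scheme balance the charges without a large case split.
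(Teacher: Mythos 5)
Theorem~\ref{dhrtheorem} is not proved in this paper at all: it is a survey statement, with part (a) cited to Doyon, Hahn, and Raspaud~\cite{dhr} and part (b) cited to the authors' earlier paper~\cite{CKY0000b}. So there is no in-paper proof to compare against; your proposal has to stand on its own, and as written it does not.

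The central problem is that your text is a description of the discharging \emph{method}, not a proof of any of the five claims. You never exhibit a single concrete reducible configuration together with the extension argument showing it is reducible, you never state an actual discharging rule with a specific amount of charge, and you never carry out the verification that every vertex ends with nonnegative charge. Those are exactly the places where all of the content of such a theorem lives, and where the specific constants ($10/3$ versus $\Delta+8$, $14/5$ versus $\Delta+3$, etc.) are actually earned. Saying ``the absence of the reducible configurations is exactly what forces every low-degree vertex to have enough high-degree support'' is a statement of hope, not an argument; for a fixed pair $(m,k)$ it can simply be false that any manageable list of reducible configurations suffices, and determining whether it is true is the entire difficulty. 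Your claim that the remaining cases ``tend to follow from strictly easier versions of the same argument'' is also not something you can assert without doing them: each pair $(m,k)$ requires its own tuned list of configurations and its own rules, and there is no formal implication from one case to another.

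Two further specific gaps. First, part (b) contains the assertion that $m=36/13$ is \emph{sharp} for $(D,m)=(3,36/13)$; this requires an explicit construction of a subcubic graph $G$ with $\mad(G)=36/13$ and $\chi_i(G)\ge 6$, which your proposal does not mention. Second, your choice of initial charge $\mu(v)=d(v)-m$ and the claim that ``the total initial charge is strictly negative'' is only valid if you apply the $\mad$ bound to the whole graph; since $\mad$ is a maximum over subgraphs, the standard argument instead derives a contradiction by showing the final charges certify average degree at least $m$ in some subgraph (or in $G$ itself), and this bookkeeping needs to be stated correctly. None of these objections says the theorem is false --- it is true, and proved in the cited references by arguments of roughly the shape you describe --- but your proposal supplies the scaffolding without any of the load-bearing parts.
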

\noindent

The major tool used in the proofs of Theorems~\ref{planar-girth} and~\ref{dhrtheorem} is the discharging method, which relies heavily on the idea of reducible subgraphs.  A {\it reducible subgraph} $H$ is a subgraph such that any coloring of $G-H$ can be extended to a coloring of $G$.   Since the coloring of $G-H$ will restrict the choice of colors on $H$,  these arguments work well when the graph $G$ is sparse.

In particular, if $\mad(G)$ is much smaller than $\Delta(G)$, then we
are guaranteed a vertex $v$ with degree much smaller than $\Delta(G)$.
 Such a vertex $v$ is a natural candidate to be included in our
reducible subgraph $H$, since $v$ has at least $\Delta(G)$ allowable
colors and only has a few restrictions on its color.  However, if
$\mad(G)$ is nearly as large as $\Delta(G)$, then we are not
guaranteed the presence of such a low degree vertex $v$.  Here it is
less clear how to proceed.
Thus, proving results when $\Delta(G)-\mad(G)$ is small is a much
harder task than proving analogous results when $\Delta(G)-\mad(G)$ is
larger.

In this paper,  we study graphs with low injective chromatic number, namely $G$ such that $\chi_i(G)\le \Delta+1$. 
We consider sparse graphs with bounded maximum average degree, which include planar graphs with high girth.  Our results below extend or generalize the corresponding results in Theorem~\ref{planar-girth}.   

\begin{theorem}\label{madthm}
Let $G$ be a graph with maximum degree $\Delta$.

(a) If $\mad(G) \leq \frac{5}{2}$, then $\chi_i(G)\leq\Delta + 1$.

(b) If $\mad(G) < \frac{42}{19}$, then $\chi_i(G)=\Delta$.
\end{theorem}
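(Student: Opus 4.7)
Both parts will be attacked by the standard discharging method applied to a minimum counterexample. For part (a), let $G$ be a graph of smallest $|V(G)|+|E(G)|$ among those violating the claim, so $\mad(G)\le 5/2$, $\chi_i(G)>\Delta(G)+1$, and every proper subgraph $H$ admits an injective $(\Delta(G)+1)$-coloring, since $\chi_i(H)\le \Delta(H)+1\le \Delta(G)+1$. The first task is to prove two elementary reducible configurations. A vertex $v$ of degree $1$ cannot occur, because after injectively coloring $G-v$ with $\Delta+1$ colors, $v$ is forbidden at most the $d(u)-1\le \Delta-1$ colors of its neighbor's other neighbors, leaving $\ge 2$ choices. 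Two adjacent $2$-vertices $v_1v_2$ cannot occur either: writing $w_i$ for the other neighbor of $v_i$, deleting $v_1,v_2$ and coloring the rest, each $v_i$ is forbidden at most $d(w_i)-1\le \Delta-1$ colors of $w_i$'s other neighbors plus the single color $c(w_{3-i})$, and $v_1,v_2$ share no common neighbor (except in the triangle case $w_1=w_2$, which one handles similarly), so each has $\ge 1$ allowed color and can be chosen independently. Additional reducibility, for example ruling out $3$-vertices with two or more $2$-vertex neighbors, would be the next objective.

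Next comes the discharging. Assign each $v$ the initial charge $\mu(v)=d(v)-5/2$, so the total is $2|E(G)|-(5/2)|V(G)|\le 0$. The natural first rule is that every vertex of degree $\ge 3$ sends $1/2$ to each of its adjacent $2$-vertices. By the second reducibility each $2$-vertex has both neighbors of degree $\ge 3$ and so ends with charge $-1/2+2\cdot(1/2)=1/2$; a $k$-vertex with $t$ adjacent $2$-vertices ends with $k-5/2-t/2$, which is $\ge 0$ iff $t\le 2k-5$. Any $k$-vertex with $k\ge 5$ is safe since $t\le k\le 2k-5$, so the only threats are $3$-vertices with $t\ge 2$ and $4$-vertices with $t=4$. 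The heart of the argument is to close this gap, either by proving further reducibility of these dangerous configurations or by introducing an auxiliary rule that reroutes charge from heavier vertices; a contradiction then arises from the strictly positive final charges at some class of vertices.

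Part (b) follows the same blueprint with target $\Delta$ colors and initial charge $\mu(v)=d(v)-42/19$, so $\sum\mu(v)<0$ and we need every final charge to be $\ge 0$. The tighter color budget weakens the elementary reducibilities: a degree-$1$ vertex is still reducible ($|A_v|\ge 1$), but two adjacent $2$-vertices with both outer neighbors of degree $\Delta$ may no longer be, so stronger configurations (for instance threads of three or more consecutive $2$-vertices) become the baseline reducibilities, and one must rule out more of the ``$2$-vertex attached to heavy $3^+$-vertex'' patterns. Numerically, $42/19=2+4/19$ and $3-42/19=15/19$ show that a $3$-vertex must ``pay'' for nearly four $2$-vertices on average, which calibrates the required list of reducible configurations. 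I expect the main obstacle in both parts to be the same: closing the gap between the easy reducibilities and the stronger statements needed so the discharging balances, and for the trickiest configurations I would expect to uncolor an additional carefully chosen vertex and extend via a Hall-type or kernel-method argument on the resulting list-coloring instance.
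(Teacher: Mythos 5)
Your proposal sets up the standard machinery correctly (minimum counterexample, no $1$-vertices, no $2$-threads for part (a), charge $d(v)-\frac52$ with $3^+$-vertices paying their $2$-neighbors), and for $\Delta\ge 4$ this is essentially how the paper proceeds: it forbids one more local configuration (a $3$-vertex with three $2$-neighbors, one of which is adjacent to another $3$-vertex) and balances the charges with rules very close to yours. But you explicitly leave the ``heart of the argument'' open, and the two ways you propose to close it --- further local reducibility of the dangerous configurations, or rerouting charge from heavier vertices --- both fail exactly where the theorem is hard, namely $\Delta=3$. There, a $2$-vertex $u$ between two $3$-vertices can have four neighbors in $G^{(2)}$ and only $\Delta+1=4$ colors, so the configuration is not greedily reducible; and there are no heavier vertices to reroute from. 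The same obstruction, in sharper form, blocks part (b) when $\Delta=3$.

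The paper's resolution, which is its main new idea and is absent from your plan, is to use reducible configurations of \emph{unbounded} size found from global structure. For (a) with $\Delta=3$ one looks at the subgraph $G_{23}$ induced by edges joining $2$-vertices to $3$-vertices: a counting argument shows some component contains a cycle $\C$ through a vertex $u$ of degree $3$ in that component, and the whole set $J=V(\C)\cup N(u)$ is reducible, the extension being done not greedily but via Vizing's list-coloring theorem (Theorem A) and the Borodin/Erd\H{o}s--Rubin--Taylor degree-choosability theorem (Theorem B) applied to $G^{(2)}[J]$. Part (b) with $\Delta=3$ builds an auxiliary graph $H$ on the $3$-vertices joined by suitable threads, proves by an averaging argument that $H$ has average degree above $2$ and hence contains a cycle with a degree-$3$ vertex, and again colors the corresponding unbounded configuration with Theorems A and B (including the trick of temporarily uncoloring a vertex to free a color, which you do anticipate). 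Two further points you would need to repair: the weak inequality $\mad(G)\le\frac52$ in (a) permits $\sum_v\mu(v)=0$, so ``all final charges nonnegative'' gives no contradiction in the tight case --- the paper handles $\mad(G)=\frac52$ separately, where $G_{23}$ is a union of even cycles and Theorem B applies directly to $G^{(2)}$; and for (b) the correct baseline reducibilities are no $4$-threads and no $3$-threads ending at a $3$-vertex, with the constant $\frac{42}{19}$ arising from the inequality $2V_2/V_3>\frac{15}{2}$ that drives the averaging argument, not from a vertex-by-vertex charge balance.
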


\begin{theorem}\label{girththm}
Let $G$ be a planar graph with girth $g(G)$ and maximum degree $\Delta\geq 4$.
  
(a) If $g(G)\geq 9$, then $\chi_i(G)\leq\Delta+1$.

(b) If $g(G)\geq 13$, then $\chi_i(G)=\Delta$.
\end{theorem}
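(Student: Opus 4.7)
The two parts of Theorem~\ref{girththm} resemble Theorem~\ref{madthm} in spirit, but one cannot simply quote Theorem~\ref{madthm}: a planar graph of girth $g$ satisfies only $\mad(G) < \frac{2g}{g-2}$, and at $g=9$ this gives $\mad(G) < \frac{18}{7}$, which exceeds the threshold $\frac{5}{2}$ of Theorem~\ref{madthm}(a); similarly, $\frac{26}{11}$ at $g=13$ exceeds $\frac{42}{19}$. So the plan is to reuse the \emph{coloring-extension} half of the proof of Theorem~\ref{madthm}, while replacing its vertex-only discharging with a fresh planar discharging argument that exploits the high girth through Euler's formula.

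For part (a), I would take a minimum counterexample $G$ (minimum in $|V|+|E|$), so $\Delta(G)\ge 4$, $g(G)\ge 9$, and $\chi_i(G)>\Delta+1$. Every smaller graph satisfies $\chi_i\le\Delta+1$, which lets us rule out the same list of local configurations $\F$ shown to be reducible in the proof of Theorem~\ref{madthm}(a): a reducible configuration $H\subseteq G$ is one where any injective $(\Delta+1)$-coloring of $G-H$ extends (using the standard count that each vertex has at most $\Delta$ forbidden colors from each neighbor). These are properties of a minimum counterexample to the $\Delta+1$ bound and do not depend on any $\mad$ hypothesis. I would then fix a planar embedding of $G$ and assign initial charges $\mu(v)=d(v)-4$ and $\mu(f)=d(f)-4$, so that $\sum_v\mu(v)+\sum_f\mu(f)=-8$ by Euler's formula. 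Because $g\ge 9$, every face begins with charge $\ge 5$, which is the reservoir we use.

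The discharging rules would move charge from faces (and from high-degree vertices) to the $2$- and $3$-vertices, which are the only negatively charged objects. A natural starting rule is to have each face send a fixed amount (say $2$ to each incident $2$-vertex, $1$ to each incident $3$-vertex), with supplementary rules handling pairs of light vertices on a common face and light vertices whose high-degree neighbor lies on a short face. The absence of the configurations in $\F$ limits how many light vertices can cluster along one face and around one vertex, so after discharging every element should end with nonnegative charge, contradicting the total of $-8$. Part (b) follows the same template using the stronger reducible configurations underlying Theorem~\ref{madthm}(b), with $g\ge 13$ giving each face the larger initial charge $d(f)-4\ge 9$ needed to force $\chi_i(G)=\Delta$ rather than $\Delta+1$.

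The main obstacle is the first step: proving that the configurations in $\F$ really are reducible for injective coloring with only $\Delta+1$ colors. Unlike ordinary coloring, an injective color conflict propagates through a common neighbor, so an uncolored vertex $v$ is constrained not by its neighbors' colors but by the colors on $N(N(v))\setminus\{v\}$; bounding the number of forbidden colors requires delicately counting second-neighborhoods and often an auxiliary bipartite matching or Hall-type argument to show the partial coloring extends. The second obstacle is calibration: because $\frac{18}{7}$ and $\frac{26}{11}$ are close to the targets $\frac{5}{2}$ and $\frac{42}{19}$, the girth hypotheses are essentially tight, so the discharging rules have little slack and must be tuned so that a face of length exactly $g$ containing the maximum allowable number of light vertices (as dictated by $\F$) still ends with nonnegative charge.
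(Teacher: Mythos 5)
Your overall architecture --- minimal counterexample, reducible configurations, charges $\mu(v)=d(v)-4$ and $\mu(f)=d(f)-4$ summing to $-8$ by Euler's formula, and face--to--light-vertex discharging --- is exactly the paper's, and your observation that $\frac{2g}{g-2}$ at $g=9,13$ overshoots the $\mad$ thresholds correctly explains why Theorem~\ref{madthm} cannot simply be invoked. But there is a genuine gap at the step you yourself flag as ``the first step'': the reducible configurations from the proof of Theorem~\ref{madthm} are \emph{not} the ones this proof needs, and the new ones are the real content. For part (a) the paper must forbid, in addition to $1$-vertices and $2$-threads, a configuration defined along a face: five consecutive vertices $u_1,\dots,u_5$ on a face with degrees $2,3,2,3,2$ such that the off-face neighbor of $u_2$ has degree at most $3$ (plus a further ad hoc reduction inside Case~2d of the face analysis). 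For part (b) it needs (RC4)--(RC6), e.g.\ no $2$-thread with both ends of degree $3$, and no $3$-vertex incident to one $1$-thread and two $2$-threads; none of these appear in the $\mad$ proofs, which for $\Delta=3$ rely instead on a global cycle structure in an auxiliary graph rather than on bounded local configurations. Without naming and reducing these configurations, the discharging cannot close.

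Second, your candidate rule (each face sends $2$ to each incident $2$-vertex and $1$ to each incident $3$-vertex) cannot be calibrated to work: a $2$-vertex lies on two faces and needs only $2$ in total, and, more importantly, a $9$-face with degree sequence $(3,2,3,2,3,2,3,2,3)$ has initial charge $5$ but would be asked to pay $4\cdot 2+5\cdot 1=13$. Even with the paper's amounts ($1$ per $2$-vertex and $\frac13$ per $3$-vertex) such a face ends at $-\frac23$, which is precisely why the paper adds a face-to-face transfer rule triggered by the pattern $(4^+,3,2,3,4^+)$, and why part (b) requires a second discharging phase in which certain $3$-vertices (classified as weak, strong, slim, or fat) send charge \emph{back} to faces. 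Your ``supplementary rules'' gesture at this, but the direction of some transfers (vertex to face, face to face) and the classification of $3$-vertices are where all the work lies; as written, the proposal leaves both essential ingredients --- the configuration list and a balanced set of rules --- unsupplied.
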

\noindent

Like many similar results, we use discharging arguments in our proofs.   In most discharging arguments, the reducible subgraphs are of bounded size. Our contribution to injective coloring is using reducible configurations of arbitrary size, similar to the 2-alternating cycles introduced by Borodin~\cite{B89} and generalized by Borodin, Kostochka, and Woodall~\cite{BKW97}.  
\smallskip

Let $G$ be a Class 2 graph, that is, suppose the edge-chromatic number of $G$ is $\Delta+1$. Let $G'$ be the graph obtained from $G$ by inserting a degree 2 vertex on each edge.  Now $\chi_i(G')>\Delta$, for otherwise, we could color the edges of $G$ by the colors of the 2-vertices in $G'$ on the corresponding edges, which would give a $\Delta$-edge-coloring of $G$.   Here we list some Class 2 graphs; see Fiorini and Wilson~\cite{FW77} for more details. 

\begin{theorem}[Fiorini and Wilson~\cite{FW77}]
\label{fwtheorem}
(a) If $H$ is a regular graph with an even order, and G is a graph obtained from $H$ by inserting a new vertex into one edge of $H$, then $G$ is of Class 2. 

(b) For any integers $D\ge 3$ and $g\ge 3$, there is a Class 2 graph of maximum degree $D$ and girth $g$.
\end{theorem}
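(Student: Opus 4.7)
The plan is to prove part (a) via a parity argument on a hypothetical $\Delta$-edge-coloring, and then deduce part (b) by feeding a classical construction of regular graphs of prescribed girth into part (a).

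For part (a), set $k := \Delta(H)$; for the statement to have content we need $\Delta(G)=k$, which forces $k\ge 2$. Write $n := |V(H)|$, which is even by hypothesis, so $|V(G)|=n+1$ is odd. In $G$ the inserted vertex $w$ has degree $2$, while every other vertex still has degree $k$. I would suppose toward a contradiction that $G$ admits a proper $k$-edge-coloring $c$. Since every vertex $x\neq w$ has degree $k$, the $k$ edges at $x$ must carry the $k$ distinct colors; equivalently, each color class is a matching that saturates $V(G)\setminus\{w\}$. Now let $\alpha$ be the color of the edge $uw$, where $u$ is one neighbor of $w$. The matching $c^{-1}(\alpha)$ saturates $w$ (via $uw$) in addition to saturating all of $V(G)\setminus\{w\}$, so it is a perfect matching of $G$. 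This contradicts $|V(G)|$ being odd, proving $\chi'(G)\ge k+1$ and hence that $G$ is of Class~$2$.

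For part (b), I would invoke the standard existence theorem (for example, the Erd\H{o}s--Sachs theorem, or the existence of $(D,g)$-cages) that for every $D\ge 3$ and every $g\ge 3$ there is a $D$-regular graph $H$ of girth at least $g$. The order of $H$ may be taken to be even: if $D$ is odd this is automatic from the handshake lemma, and if $D$ is even one can pass to two vertex-disjoint copies without altering the regularity or the girth. Subdividing any edge $e$ of $H$ produces a graph $G$ with $\Delta(G)=D$ (since $D\ge 2$) and $g(G)\ge g(H)\ge g$, because cycles of $H$ through $e$ grow by one while cycles of $H$ avoiding $e$ are unchanged. Part (a) then guarantees that $G$ is of Class~$2$, giving the desired example.

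The only real obstacle is part (b)'s reliance on the existence of regular graphs with prescribed girth, which is a classical but nontrivial result -- fortunately it is standard enough to be cited outright. Part (a) itself reduces to a one-line parity observation: in any putative $k$-edge-coloring of the odd-order graph $G$, the color class containing an edge at $w$ would be forced to be a perfect matching, which simply does not exist.
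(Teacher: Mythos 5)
The paper does not actually prove this statement---it is quoted verbatim from Fiorini and Wilson's monograph and used as a black box to build the Corollary that follows---so there is no internal proof to compare against; I can only judge your argument on its own terms, and it is correct and is the standard one. Your part (a) is the classical parity argument: in a putative $k$-edge-coloring of $G$, every color appears at every vertex of degree $k$, so the color class of an edge at the subdivision vertex $w$ would be a perfect matching of the odd-order graph $G$, which is impossible. (This is equivalent to the usual ``overfull'' count: $G$ has $nk/2+1$ edges but $k$ matchings can cover at most $k\cdot n/2$ of them.) Note that your argument quietly covers the case $k=2$ as well, since then $w$ also has degree $k$ and the conclusion only gets stronger. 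Your part (b) correctly reduces to the Erd\H{o}s--Sachs existence theorem plus the even-order fix via two disjoint copies. The one small gap is that you only establish $g(G)\ge g$, whereas the statement (and the way the paper later uses it, e.g.\ ``girth $2g$'' and ``$g(G)=8$'' in the Corollary) asks for girth exactly $g$: subdividing an edge could in principle raise the girth to $g+1$ if every shortest cycle of $H$ passes through the chosen edge. This is repaired in one line---take two disjoint copies of $H$ and subdivide an edge in only one of them; the untouched copy pins the girth at exactly $g$, the union is regular of even order so part (a) still applies, and the maximum degree is unchanged. With that patch the proposal is complete.
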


By combining Theorem~\ref{fwtheorem} with results on Class 2 graphs, we have the following corollary.

\begin{corollary}
There are planar graphs with $g(G)=8$ and $\chi_i(G)\ge \Delta+1$.   There are graphs with $\mad(G)=\frac{8}{3}$ and $\chi_i(G)\ge\Delta+1$.  For any $\Delta\ge 3$ and $g\ge3$,  there are graphs with maximum degree $\Delta$,  girth $2g$, and $\chi_i(G)\ge \Delta+1$. 
\end{corollary}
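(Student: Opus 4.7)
The plan is to apply, three times, the construction introduced in the paragraph preceding the corollary. That construction takes a Class 2 graph $H$ of maximum degree $\Delta \geq 2$ and replaces every edge of $H$ by a path of length two through a new degree-$2$ vertex; the resulting graph $G$ then has $\Delta(G) = \Delta$ and $\chi_i(G) \geq \Delta+1$. It also has girth $2g(H)$ (each $\ell$-cycle of $H$ becomes a $2\ell$-cycle of $G$) and inherits planarity from $H$. The task therefore reduces to producing, for each of the three claims, a Class 2 graph $H$ with suitable parameters.

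For the first claim, I need a planar Class 2 graph of girth $4$. I take $H_0 = Q_3$, the $3$-cube: it is $3$-regular, of even order $8$, planar, and of girth $4$. By Theorem~\ref{fwtheorem}(a), subdividing any one edge of $Q_3$ produces a Class 2 graph $H$. The girth of $H$ is still $4$, because each edge of $Q_3$ lies on exactly two of its six $4$-faces, so four of those $4$-cycles survive unchanged in $H$. Applying the subdivision construction to $H$ now yields a planar graph $G$ of girth $8$ with $\Delta(G) = 3$ and $\chi_i(G) \geq 4$.

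For the second claim, I take $H = K_5$, which is $4$-regular and Class~$2$ (a $d$-regular graph of odd order admits no $d$-edge-coloring, because each color class of such a coloring would be a perfect matching of an odd-order graph). The subdivision $G$ has five degree-$4$ vertices and ten degree-$2$ vertices, so $|V(G)|=15$, $|E(G)|=20$, and the average degree of $G$ is $8/3$. For any subgraph $S$ of $G$ with $\ell$ original vertices and $k$ subdivision vertices, every edge of $S$ has exactly one endpoint of each type, so $|E(S)| \leq \min(2k,4\ell)$; splitting on whether $k \leq 2\ell$ gives $\min(2k,4\ell) \leq \frac{4(k+\ell)}{3}$ in both cases, hence $\mad(G) \leq 8/3$. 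Thus $\mad(G) = 8/3$, and $\chi_i(G) \geq 5$.

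For the third claim, Theorem~\ref{fwtheorem}(b) provides, for any $\Delta \geq 3$ and $g \geq 3$, a Class 2 graph $H$ with $\Delta(H) = \Delta$ and girth $g(H) = g$. Its edge-subdivision $G$ then has $\Delta(G) = \Delta$, $g(G) = 2g$, and $\chi_i(G) \geq \Delta+1$. The only step that is not immediate is the $\mad$ computation in the second claim; every other verification follows directly from Theorem~\ref{fwtheorem} and the subdivision observation preceding the corollary.
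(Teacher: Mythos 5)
Your proposal is correct and follows exactly the route the paper intends: the paper gives no explicit proof, merely asserting that the corollary follows from the edge-subdivision construction in the preceding paragraph combined with Theorem~\ref{fwtheorem}, and you instantiate that recipe with concrete Class 2 witnesses ($Q_3$ with one edge subdivided, $K_5$, and the graphs from Theorem~\ref{fwtheorem}(b)) and verify the girth, planarity, and $\mad$ claims. The only substantive verification the paper leaves implicit, the computation showing $\mad(G)=\frac83$ for the subdivided $K_5$, is carried out correctly in your argument.
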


There are some gaps between the bounds in the above Corollary and Theorems~\ref{madthm} and ~\ref{girththm}.  Our bounds on mad and girth may be further improved if some clever idea is elaborated. 

\medskip

 When we extend a coloring of $G-H$ to a subgraph $H$, the colors available for vertices of $H$ are restricted, thus we will essentially supply a list of available colors for each vertex of $H$.  The following two classic theorems on list-coloring will be used heavily.  

\begin{theoremA}[Vizing~\cite{V76}]\label{vizinglemma}
Let $L$ be an assignment such that $|L(v)|\geq d(v)$ for all $v$ in a connected graph $G$.

(a) If $|L(y)| > d(y)$ for some vertex $y$, then $G$ is $L$-colorable.

(b) If $G$ is 2-connected and the lists are not all identical, then $G$ is $L$-colorable.
\end{theoremA}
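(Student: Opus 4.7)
The plan is to establish part (a) by a greedy coloring along a carefully chosen vertex ordering, and then to bootstrap part (a) into part (b) via a one-vertex-deletion argument.

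For part (a), I would use the connectivity of $G$ to build a spanning tree $T$ rooted at the distinguished vertex $y$, and then order the vertices $v_1, v_2, \ldots, v_n = y$ by nonincreasing distance from $y$ in $T$ (a reverse BFS ordering), so that every $v_i$ with $i < n$ has its $T$-parent appearing later in the ordering. Coloring greedily in this order, each such $v_i$ has at least one still-uncolored neighbor (its $T$-parent), so at most $d(v_i) - 1$ already-colored neighbors forbid colors from $L(v_i)$, leaving at least one usable color since $|L(v_i)| \geq d(v_i)$. At the final step all $d(y)$ neighbors of $y$ have been colored, but the strict inequality $|L(y)| > d(y)$ still leaves a color available for $y$.

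For part (b), I would first locate an edge $uv$ of $G$ with $L(u) \neq L(v)$; such an edge must exist, for otherwise propagating equal lists along any path between two vertices with distinct lists would force all lists to coincide, contradicting the hypothesis. Choose a color $c \in L(u) \setminus L(v)$ (swapping the roles of $u$ and $v$ if needed), assign $c$ to $u$, and set $L'(w) := L(w) \setminus \{c\}$ for each neighbor $w$ of $u$ and $L'(w) := L(w)$ otherwise. Each neighbor $w \neq v$ of $u$ loses at most one color, so $|L'(w)| \geq d_G(w) - 1 = d_{G-u}(w)$; the vertex $v$ loses no color because $c \notin L(v)$, so $|L'(v)| \geq d_G(v) > d_{G-u}(v)$. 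The $2$-connectivity of $G$ ensures $G - u$ is still connected, so the hypothesis of part (a) is met on $G - u$ with distinguished vertex $v$, and invoking (a) extends the coloring to all of $G$.

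The main obstacle is setting up the right ordering in part (a); once the BFS trick is in place, the greedy pass and the reduction in (b) follow routinely. The $2$-connectivity hypothesis in (b) is used exactly once --- to guarantee that $G - u$ remains connected so that (a) applies --- which explains why connectivity alone would not suffice.
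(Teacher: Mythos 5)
The paper does not prove Theorem A; it is imported as a classical result of Vizing (and of Borodin and of Erd\H{o}s--Rubin--Taylor in the related Theorem B), so there is no in-paper argument to compare yours against. Your proof is correct and is essentially the standard one: the reverse-BFS (spanning tree rooted at $y$) greedy ordering gives part (a), and your reduction for part (b) is sound --- the existence of an edge $uv$ with $L(u)\neq L(v)$ follows from connectivity as you say, the vertex $v$ retains a surplus in $G-u$ because the chosen color $c$ was not in $L(v)$, every other neighbor of $u$ loses at most one color against a degree that also drops by one, and $2$-connectivity is used exactly where you use it, to keep $G-u$ connected so that part (a) applies with distinguished vertex $v$. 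No gaps.
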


We say that a graph is {\it degree-choosable} if it can be colored from lists when each vertex is given a list of size equal to its degree.

\begin{theoremB}[Borodin~\cite{B77},  Erd\H{o}s, Rubin, and Taylor~\cite{ERT79}] \label{west}
A graph $G$ fails to be degree-choosable if and only if every block is a complete graph or an odd cycle.
\end{theoremB}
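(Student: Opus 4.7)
The plan is to prove the two directions of the characterization separately. Write $B_1,\dots,B_k$ for the blocks of $G$; call $G$ a \emph{Gallai tree} if each $B_i$ is a complete graph or an odd cycle. I must show that $G$ is not degree-choosable if and only if $G$ is a Gallai tree.

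For the ``only if'' direction (every block is $K_n$ or an odd cycle implies $G$ is not degree-choosable), I proceed by induction on the number of blocks. For a single block the construction is classical: for $K_n$, give every vertex the list $\{1,\dots,n-1\}$, so $n$ vertices must share $n-1$ colors; for an odd cycle $C_{2\ell+1}$, use the standard pattern of 2-lists drawn from $\{1,2,3\}$ that alternates in a way which forces a parity obstruction when one closes the cycle. For the inductive step, choose an endblock $B$ with cut vertex $v$ and apply induction to $G' = G - (B-v)$ to get a bad list assignment $L'$ on $G'$. I then extend $L'$ to $B$ using a shifted copy of the single-block construction on $B$, designed so that \emph{every} color $v$ could receive from any $L'$-coloring of $G'$ reduces the lists on $B-v$ to a non-colorable assignment for the remaining complete graph or even path.

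For the ``if'' direction, suppose $G$ is connected and some block $B_0$ is neither a complete graph nor an odd cycle; I want to show $G$ is $L$-choosable whenever $|L(v)|=d_G(v)$ for all $v$. By Theorem A, it suffices to color $B_0$ first from the lists inherited on it (vertices outside $B_0$ can then be colored by repeatedly peeling off leaf-like vertices, each of which gains a surplus color and is handled by Theorem A(a)). So everything reduces to the 2-connected case: if $G$ itself is 2-connected and neither $K_n$ nor $C_{2k+1}$, prove $G$ is degree-choosable.

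The crux is a structural lemma: any 2-connected graph that is not a complete graph or an odd cycle contains one of three ``good'' configurations: (i) an even cycle with a chord, (ii) two vertex-disjoint odd cycles joined by a (possibly trivial) path, or (iii) a theta-graph (two vertices linked by three internally disjoint paths). I plan to obtain this via an ear decomposition, starting from a shortest cycle and tracking parities as ears are added. For each of (i)–(iii), I exhibit a direct coloring of the configuration $H$ from any lists of size equal to degree \emph{in $H$} (e.g.\ for (i) I color the chord endpoints last, for (iii) I exploit that at least one of the three internal paths together with another forms an even cycle). Once $H$ is colored, the remainder of $G$ is handled by ordering its vertices so that each has a neighbor already colored, invoking Theorem A(a) at each step because such a vertex now has $|L(v)|>d_{\textrm{uncolored}}(v)$.

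The main obstacle will be the ear-decomposition case analysis producing one of the three configurations (i)–(iii), and then the verification that each configuration is degree-choosable — in particular configuration (iii), where the parity arguments require some care to show one of the three ears can be completed regardless of how the branch vertices end up colored. Once that structural/coloring lemma is in hand, the reduction from arbitrary $G$ to its bad block $B_0$ (using Theorem A) is routine.
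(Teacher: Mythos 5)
The paper does not prove Theorem~B at all: it is quoted as a classical result of Borodin and of Erd\H{o}s--Rubin--Taylor and used as a black box, so there is no in-paper argument to compare yours against; what follows assesses your outline on its own terms. Your plan is the standard one from the original papers: for the ``only if'' direction, identical fresh lists on each block merged at the cut vertices (your inductive description of why this is a bad assignment is correct); for the ``if'' direction, a reduction to the 2-connected case followed by a version of Rubin's block lemma, of which your configurations (i)--(iii) are a standard packaging. One small slip in the structural lemma as stated: an even cycle is 2-connected, neither complete nor an odd cycle, yet contains none of (i)--(iii); you must exclude all cycles from the lemma and dispose of even cycles separately (they are 2-choosable directly).

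The genuine problem is the reduction step, which fails as written: you propose to color $B_0$ \emph{first} and then peel off the vertices outside $B_0$, claiming each ``gains a surplus color.'' It gains nothing: a vertex $u$ outside $B_0$ has only $d_G(u)$ colors, and once its neighbors nearer to $B_0$ are colored it may have exactly $d_G(u)$ colored neighbors. Concretely, let $B_0=C_4$ with a pendant vertex $u$ attached at $x$, with $L(u)=\{1\}$ and $1\in L(x)$; coloring $B_0$ first may assign $x$ the color $1$, after which $u$ cannot be colored, and Theorem~A(a) does not rescue you since the component $\{u\}$ of $G-V(B_0)$ has no vertex whose remaining list exceeds its degree there. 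The correct order is the reverse: process $V(G)\setminus V(B_0)$ greedily in non-increasing order of distance from $B_0$ (each such vertex still has an uncolored neighbor on its path to $B_0$, hence a spare color), and color $B_0$ \emph{last}, at which point every $v\in B_0$ retains at least $d_{B_0}(v)$ colors and the degree-choosability of the 2-connected block finishes the argument. With that fix, and with the two lemmas you flag as obstacles actually carried out, your outline becomes the standard proof.
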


Here we introduce some notation.   A $k$-vertex is a vertex of degree $k$; a $k^+$- and a $k^-$-vertex have degree at least and at most $k$, respectively.  A {\it thread} is a path with 2-vertices in its interior and $3^+$-vertices as its endpoints.  A $k$-thread has $k$ interior 2-vertices.  If $u$ and $v$ are the endpoints of a thread, then we say that $u$ and $v$ are {\it pseudo-adjacent}.  If a $3^+$-vertex $u$ is the endpoint of a thread containing a 2-vertex $v$, then we say that $v$ is a {\it nearby vertex} of $u$ and vice versa.   For other undefined notions,  we refer to \cite{W96}.

The paper is organized as follows.   In Section 2, we prove Theorem~\ref{madthm}(a). In Section 3, we prove Theorem~\ref{madthm}(b).  In Section 4, we prove Theorem~\ref{girththm}.

\section{Maximum average degree conditions that imply $\chi_i(G)\leq\Delta+1$}

We split the proof of Theorem~\ref{madthm}(a) into two lemmas.  We start with the case $\Delta\ge 4$, which only needs a simple discharging argument.  

\begin{lemma}
\label{52delta4}
Let $G$ be a graph with $\Delta\geq 4$.  If $\mad(G) \leq \frac{5}{2}$, then $\chi_i(G) \leq \Delta + 1$.
\end{lemma}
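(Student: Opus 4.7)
The plan is to argue by minimum counterexample with a standard discharging argument. Let $G$ be a counterexample minimizing $|V(G)| + |E(G)|$; by minimality and since $\mad$ is hereditary under subgraphs, every proper subgraph of $G$ satisfying the lemma's hypotheses admits a $(\Delta+1)$-injective coloring by induction. The goal is to identify a short list of reducible configurations that $G$ cannot contain, and then use discharging with $\mad(G)\leq 5/2$ to force one of them.

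I would establish the following reducible configurations. (R1) $\delta(G)\geq 2$: a vertex $v$ of degree at most $1$ has at most $\Delta-1$ forbidden colors in $G^{(2)}$, so any inductive coloring of $G-v$ leaves at least two free colors for $v$. (R2) No 2-vertex is adjacent to another 2-vertex: a 2-vertex $v$ with a 2-neighbor $u$ has at most $\Delta$ forbidden colors in $G^{(2)}$ (at most one from $u$'s side and at most $\Delta-1$ from $v$'s other neighbor), leaving a free color for extension. (R3) No 3-vertex is adjacent to three 2-vertices: given such a 3-vertex $v$ with 2-neighbors $w_1,w_2,w_3$ and corresponding pseudo-neighbors $x_i$, remove $\{v,w_1,w_2,w_3\}$, color the remainder by induction, and extend. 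The three $w_i$ pairwise share $v$ as a common neighbor, so they induce $K_3$ in $G^{(2)}$; each $w_i$ has an extension list of size at least $\Delta+1-(\Delta-1)=2$, while $v$ is not a $G^{(2)}$-neighbor of any $w_i$ and has its own list of size at least $\Delta-2$. Theorem A(a) finishes when some $w_i$-list has size $>2$ and Theorem A(b) finishes when the three $w_i$-lists are not all identical.

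For the discharging, assign each $v$ the initial charge $\mu(v):=d(v)-5/2$, so $\sum_v\mu(v)\leq 0$ by hypothesis. Use the single rule that every $3^+$-vertex sends charge $1/4$ to each of its 2-neighbors. Using (R1)--(R3): every 2-vertex has two $3^+$-neighbors and ends with $-1/2+2(1/4)=0$; every 3-vertex has at most two 2-neighbors and ends with $\geq 1/2-2(1/4)=0$; every $k$-vertex with $k\geq 4$ ends with $\geq 3k/4-5/2\geq 1/2>0$. Since $\Delta(G)\geq 4$ guarantees at least one $4^+$-vertex, the total final charge is strictly positive, contradicting $\sum_v\mu(v)\leq 0$.

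The main obstacle will be verifying reducibility of (R3). Since $K_3$ is not degree-choosable (Theorem B), the genuinely tight case is when all three pseudo-neighbors $x_i$ have degree exactly $\Delta$ and by coincidence the three extension lists coincide as the same $2$-element set. Closing this residual case will require either a local recoloring of the inductive coloring of $G-\{v,w_1,w_2,w_3\}$ to break the list coincidence, or enlarging the reducible configuration to include $x_1,x_2,x_3$ and arguing directly about that larger $G^{(2)}$-structure.
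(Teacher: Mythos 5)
There is a genuine gap, and it sits exactly where you flag it: configuration (R3) in its full strength (no $3$-vertex with three $2$-neighbors) is not established, and your discharging scheme cannot survive without it. After deleting $\{v,w_1,w_2,w_3\}$ the three $w_i$ induce a triangle in $G^{(2)}$, and when every pseudo-neighbor $x_i$ has degree $\Delta$ each $w_i$ is left with a list of size exactly $2$; if the three lists coincide, $K_3$ with identical $2$-lists is not colorable (Theorem B), and nothing in your setup rules this out. You name two possible repairs (recoloring to break the coincidence, or enlarging the configuration) but carry out neither, so the proof is incomplete. Moreover, your single rule ``each $3^+$-vertex sends $1/4$ to each $2$-neighbor'' makes a $3$-vertex with three $2$-neighbors end with charge $1/2-3/4<0$, so the discharging genuinely depends on the unproved full (R3); there is no slack to absorb the bad case.

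The paper avoids this trap by forbidding only a weaker configuration: a $3$-vertex $v$ with three $2$-neighbors \emph{one of which}, say $u$, has its other neighbor of degree $3$. Then $d_{G^{(2)}}(u)\le 4\le\Delta$, so a simple greedy extension of a coloring of $G-\{v,w_1,w_2,w_3\}$, saving $u$ for last, succeeds with no list-coincidence issue. The price is an asymmetric discharging: a $3$-vertex splits a total of $1/2$ among its $2$-neighbors while a $4^+$-vertex sends $1/3$ to each, so a $3$-vertex with three $2$-neighbors is tolerated provided each of its $2$-neighbors also has a $4^+$-neighbor contributing $1/3$ (and $1/3+1/6=1/2$ suffices); the remaining case is exactly the forbidden configuration. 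If you want to keep your cleaner rule, you must actually close the identical-lists case of (R3); otherwise, weaken (R3) to the paper's version and rebalance the charges accordingly.
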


\begin{proof}
Let G be a minimal counterexample.  It is easy to see that G has no 1-vertex and G has no 2-thread.

Observe that G also must not contain the following subgraph: a 3-vertex $v$ adjacent to three 2-vertices such that one of these 2-vertices $u$ is adjacent to a second 3-vertex.  If $G$ contains such a subgraph, let $H$ be the set of $v$ and its three neighbors.  By minimality, we can injectively color $G-H$ with $\Delta+1$ colors; now we greedily color $H$, making sure to color $u$ last.

We now use a discharging argument, with initial charge $\mu(v) = d(v)$.
We have two discharging rules:
\medskip

(R1) Each 3-vertex divides a charge of $\frac12$ equally among its adjacent 2-vertices.

(R2) Each $4^+$-vertex sends a charge of $\frac13$ to each adjacent 2-vertex.
\medskip

\noindent
Now we show that $\mu^*(v)\geq \frac{5}{2}$ for each vertex v.

$d(v)=3$: $\mu^*(v) = 3 - \frac12 = \frac{5}{2}$

$d(v)\geq 4$: $\mu^*(v) \geq d(v) - \frac{d(v)}3 = \frac{2d(v)}3 \geq \frac83 > \frac{5}{2}$

$d(v) = 2$:
If $v$ is adjacent to a $4^+$-vertex, then $\mu^*(v) \geq 2 + \frac13 + \frac16 = \frac{5}{2}$.
If $v$ has two neighboring 3-vertices and each neighbor gives $v$ a charge of at least $\frac14$, then $\mu^*(v) \geq 2 + 2(\frac14) = \frac{5}{2}$.
However, if $v$ has two neighboring 3-vertices and at least one of them gives $v$ a charge of only $\frac16$, then $v$ is in a copy of the forbidden subgraph described above; so $\mu^*(v)\geq\frac{5}{2}$.

Hence, each vertex $v$ has charge $\mu^*(v) \geq \frac{5}{2}$.  Furthermore, each $4^+$-vertex has charge at least $\frac83$, so $\mad(G) > \frac{5}{2}$.
\end{proof}

The above argument fails when $\Delta=3$, since a 3-vertex $v$ may be adjacent to three 2-vertices such that one of these 2-vertices $u$ is adjacent to a second 3-vertex, which was forbidden when $\Delta\ge 4$.  We will still start with a minimal counterexample,  but  instead of finding reducible configurations in a local neighborhood, we will identify ones from global structure of the graphs.

\begin{lemma}\label{52delta3}
Let $G$ be a graph with $\Delta=3$.  If $\mad(G) \leq \frac{5}{2}$, then $\chi_i(G) \leq 4$.
\end{lemma}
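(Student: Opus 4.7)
The plan is to assume a minimum counterexample $G$ (so $\mad(G)\leq 5/2$, $\Delta(G)=3$, and $\chi_i(G)\geq 5$) and derive a contradiction via reducibility combined with discharging. Standard reducibility shows $G$ has no $1$-vertex and no $2$-thread: for a $1$-vertex $w$, any injective $4$-coloring of $G-w$ extends, since $w$ has at most two $G^{(2)}$-neighbors; for a $2$-thread with interior $2$-vertices $u,v$, these are non-adjacent in $G^{(2)}$ and each has at most three forbidden colors in any $4$-coloring of $G-\{u,v\}$, so they can be colored greedily.

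The key reducibility uses an auxiliary multigraph. Call a $3$-vertex \emph{bad} if all three of its neighbors are $2$-vertices, and let $H$ be the multigraph with vertex set the bad $3$-vertices of $G$ and one edge $vv'$ for each $2$-vertex whose two neighbors are $v$ and $v'$. I claim $H$ is a forest. If $H$ contained a chordless cycle $v_1v_2\cdots v_k v_1$, corresponding in $G$ to a cycle $C=v_1u_1v_2u_2\cdots v_ku_kv_1$ with $v_i$ bad and $u_i$ a $2$-vertex, I would inductively $4$-color $G-V(C)$ and extend. In $G^{(2)}$, $V(C)$ induces two vertex-disjoint $k$-cycles (one on the $v_i$'s, one on the $u_i$'s) with no edges between them, since a $v_i$ and a $u_j$ have disjoint $G$-neighborhoods. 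Each $v_i$ has exactly one colored $G^{(2)}$-neighbor outside $V(C)$, giving a list of size $\geq 3$, so the $v$-cycle is list-colorable by Theorem~A. Each $u_i$ has two colored $G^{(2)}$-neighbors outside, giving a list of size $\geq 2$; if $k$ is even the $u$-cycle is $2$-choosable, and if $k$ is odd the lists cannot all equal the same $2$-element set (an alternating $2$-coloring on an odd cycle being impossible), so Theorem~A applies. This gives an injective $4$-coloring of $G$, a contradiction, so $H$ is a forest. This is the promised reducible configuration of arbitrary size.

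Finally, I discharge. Set $\mu(v)=d(v)$, so $\sum_v\mu(v)\leq \tfrac{5}{2}|V(G)|$. Discharging rules send a baseline of $\tfrac{1}{6}$ from each $3$-vertex to each of its $2$-neighbors, and then route the extra slack of non-bad $3$-vertices (those with at most two $2$-neighbors) onward to the $2$-vertices adjacent to bad $3$-vertices. The forest structure of $H$ is the crucial input: in a tree component with $b$ bad $3$-vertices there are exactly $b-1$ ``T-vertices'' (internal $2$-vertices between two bad $3$-vertices) and $b+2$ ``boundary'' $2$-vertices leading to non-bad $3$-vertices, so the boundary slack compensates for the internal deficit. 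A careful case analysis shows that each vertex ends with charge $\geq\tfrac{5}{2}$ with strict inequality somewhere, contradicting $\mad(G)\leq \tfrac{5}{2}$.

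I expect the main obstacle to be this discharging step. A bad $3$-vertex has no slack of its own under any plausible base rule, yet its $2$-neighbors are precisely those most in deficit, and so purely local rules fail. The acyclicity of $H$ is the global property that makes the argument go through — it prevents bad $3$-vertices from clustering arbitrarily densely, so that extra charge from non-bad $3$-vertices at the boundary of each tree component suffices to rescue the deficit inside. Setting up rules that handle all the subtle interaction cases between bad and non-bad $3$-vertices is where most of the work lies.
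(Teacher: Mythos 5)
Your reducibility argument for cycles in $H$ is essentially sound (the cycle splits in $G^{(2)}$ into a $v$-cycle and a $u$-cycle with no edges between them, and your parity trick handles the odd case), but the configuration you reduce is too restrictive, and this is fatal for the discharging step. The paper reduces a cycle in $G_{23}$ (the subgraph of edges joining $2$-vertices to $3$-vertices) that passes through just \emph{one} vertex of $G_{23}$-degree $3$; the other $3$-vertices on that cycle may be non-bad. You only reduce cycles \emph{all} of whose $3$-vertices are bad. The difference matters: consider a theta-shaped component of $G_{23}$ whose two branch vertices are bad but are joined by three internally disjoint paths each containing non-bad $3$-vertices. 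Then $H$ restricted to this component has two vertices and at most one edge, so it is a forest and your reduction never fires; yet the component contains one more $2$-vertex than $3$-vertices, so under any charge-conserving scheme it runs a net deficit of $\frac12$ that must be imported from outside the component through its non-bad $3$-vertices --- each of which already has two $2$-neighbours demanding $\frac14$ apiece and hence no slack. No routing of the local kind you sketch (``boundary slack compensates internal deficit'' within a tree component of $H$) can reach this deficit. The paper sidesteps the issue entirely: a simple edge/vertex count (no discharging) shows that if every component of $G_{23}$ has at most one cycle then the average degree is already at least $\frac52$, so when $\mad(G)<\frac52$ some component has two independent cycles and therefore contains the more general reducible cycle.

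There is a second, independent gap at the boundary $\mad(G)=\frac52$. If $G$ has average degree exactly $\frac52$ --- for instance $V_2=V_3$ with $G_{23}$ a disjoint union of even cycles, so that $H$ is empty and all of your reductions are vacuous --- then by conservation of charge no discharging argument can terminate with every vertex at charge at least $\frac52$ and some vertex strictly above it. Your concluding contradiction is therefore unreachable for such graphs, and they require a separate argument; the paper handles them by noting that $G^{(2)}$ is then $4$-regular with no block a clique or an odd cycle and invoking Theorem~B.
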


\begin{proof}
We prove the more general statement: If $\Delta=3$ and $\mad(G)\leq \frac{5}{2}$, then $G$ can be injectively colored from {\it lists} of size 4.

Let $G$ be a minimal counterexample.  
It is easy to see that $G$ has no 1-vertex and $G$ has no 2-thread; in each case, we could delete the subgraph $H$, injectively color $G-H$, then greedily color $H$.
First, we consider the case $\mad(G) < \frac{5}{2}$. 
Let $G_{23}$ denote the subgraph induced by edges with one endpoint of degree 2 and the other of degree 3.  If each component of $G_{23}$ contains at most one cycle, then each component of $G_{23}$ contains at least as many 3-vertices as 2-vertices, so $\mad(G)\geq\frac{5}{2}$; this contradicts our assumption.  Hence, some component $H$ of $G$ contains a cycle $\C$ with a vertex $u$ on $C$ such that $d_H(u)=3$.  
Let $J = V(\C)\cup N(u)$.  Because $G$ is a minimal counterexample, we can injectively 4-color $G\setminus J$.  We will now use Theorem~A to extend the coloring of $G\setminus J$ to $J$.  

Since $\C$ is an even cycle, $G^{(2)}[J]$ consists of two components; one component contains $u$ and the other does not.  
We will use part (a) of Theorem~A to color the component that contains $u$ and we will use part $(b)$ to color the other component; note that this second component is 2-connected.  Let $L(v)$ be the list of colors available for each vertex $v$ before we color $G\setminus J$ and let $L'(v)$ be the list of colors available after we color $G\setminus J$.  To apply Theorem A as desired, we need to prove three facts: first, $|L'(v)|\geq d_{J^{(2)}}(v)$ for each $v\in J$; second, $|L'(u)| > d_{J^{(2)}}(u)$; and third, the lists $L'$ for the second component of $G^{(2)}[J]$ are not all identical.

First, observe that for each vertex $v\in J$ we have the inequality $|L(v)|=4\geq d_{G^{(2)}}(v)$;  
because each colored neighbor in $G^{(2)}$ of $v$ forbids only one color from use on $v$, this inequality implies that $|L'(v)|\geq d_{J^{(2)}}(v)$ for each $v\in J$.
Second, note that $|L(u)|=4 > 3 =d_{G^{(2)}}(u)$; this inequality implies that $|L'(u)| = 3 > 2 = d_{J^{(2)}}(u)$.
Third, let $x$ and $y$ be the two vertice on $\C$ that are adjacent to $u$ in $G$; note that $|L'(x)|=3$ and $|L'(y)|=3$, while $|L'(v)|=2$ for every other vertex $v$ in the second component of $J^{(2)}$.  Since vertices $x$ and $y$ have lists of size 3, while the other vertices have lists of size 2, it is clear that not all lists are identical.
 
Hence, we can color the first component of $J^{(2)}$ by Theorem~A part (a), and we can color the second component of $J^{(2)}$ by Theorem~A part (b).

Now consider the case $\mad(G) = \frac{5}{2}$.  If $G_{23}$ does not contain any cycle $\C$ with a vertex $u$ such that $d_{G_{23}}(u)=3$, then each component of $G_{23}$ is an even cycle.  This means that $d_{G^{(2)}}(v)=4$ for every vertex $v$.  By applying Theorem~B to $G^{(2)}$, we see that $\chi_i(G)\leq4$; it is straightforward to verify that in each component of $G^{(2)}$, some block is neither an odd cycle nor a clique.
\end{proof}

This completes the proof of Theorem~4(a).

\section{Maximum average degree conditions that imply $\chi_i(G)=\Delta$}

We split the proof of Theorem~\ref{madthm}(b) into two lemmas.  In Lemma~\ref{94delta4}, we prove a stronger result than we need for Theorem~\ref{madthm}(b), since our hypothesis here is $\mad(G)\leq\frac94$, rather than $\mad(G)\leq\frac{42}{19}$.

\begin{lemma}\label{94delta4}
Let $G$ be a graph with $\Delta \geq 4$.  If $\mad(G) \leq \frac94$, then $\chi_i(G) = \Delta$.
\end{lemma}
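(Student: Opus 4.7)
The plan is a minimum-counterexample discharging argument, analogous in structure to Lemma~\ref{52delta4} but requiring only a single discharging rule and two reducible configurations. Take $G$ to be a counterexample of minimum order. I may first assume $G$ is connected: any component $C$ with $\Delta(C)\le 3$ satisfies $\chi_i(C)\le 4\le \Delta(G)$ (by Lemma~\ref{52delta3} when $\Delta(C)=3$, using $\mad(C)\le \frac{9}{4}<\frac{5}{2}$, and trivially otherwise), while a component of strictly smaller order with $\Delta\ge 4$ would itself be a smaller counterexample.

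Next, I claim $G$ contains neither (i) a $1$-vertex nor (ii) a $2$-vertex whose two neighbors are both $2$-vertices. In either case I delete the single vertex $v$. The graph $G-v$ admits an injective coloring with at most $\Delta(G)$ colors: by minimality on each component of $G-v$ with maximum degree $\ge 4$, by Lemma~\ref{52delta3} on each component of maximum degree $3$, and trivially on paths and cycles. In configuration (i), $v$ has at most $\Delta(G)-1$ forbidden colors; in (ii), at most $1+1=2<\Delta(G)$. Either way the coloring extends to $v$, a contradiction.

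With these configurations forbidden, I would run the obvious discharging: set $\mu(v)=d(v)$, and let each $3^+$-vertex send charge $\frac{1}{4}$ to each adjacent $2$-vertex. A short case check gives $\mu^*(v)\ge \frac{9}{4}$ everywhere. Any $k$-vertex with $k\ge 3$ keeps at least $k-\frac{k}{4}=\frac{3k}{4}\ge\frac{9}{4}$; a $2$-vertex with two $3^+$-neighbors ends at $\frac{5}{2}$; and a $2$-vertex with exactly one $2$-neighbor ends at $2+\frac{1}{4}=\frac{9}{4}$, the remaining case being excluded by (ii). Because $\Delta(G)\ge 4$, at least one vertex has $d(v)\ge 4$ and hence $\mu^*(v)\ge 3>\frac{9}{4}$, producing strict inequality. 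Summing, $\sum_v \mu^*(v)>\frac{9}{4}|V(G)|\ge \sum_v d(v)=\sum_v \mu^*(v)$, a contradiction.

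I do not anticipate a genuine obstacle. The only delicate bookkeeping is ensuring that $G-v$ remains $\Delta(G)$-injectively colorable when its maximum degree has dropped below $4$, which is precisely why Lemma~\ref{52delta3} needs to be in place for the reduction step. Beyond that, the argument has generous slack: no reducible configurations of unbounded size are needed, unlike in Lemma~\ref{52delta3}, and the hypothesis $\Delta\ge 4$ supplies exactly the surplus required to turn the equality $\mu^*(v)=\frac{9}{4}$ at $3$-vertices into the strict global inequality that yields the contradiction.
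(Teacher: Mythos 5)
Your discharging computation and the final summation are fine, and your care with components of $G-v$ whose maximum degree drops below $4$ (via Lemma~\ref{52delta3}) is legitimate. The gap is in the reducibility of your configuration (ii): a $2$-vertex $v$ whose two neighbors $u,w$ are themselves $2$-vertices. You count only the colors forbidden \emph{at} $v$ after coloring $G-v$, but deleting $v$ also destroys a constraint that does not involve $v$'s own color: $u$ and $w$ have the common neighbor $v$ in $G$ and so must receive distinct colors, whereas in $G-v$ they may have no common neighbor at all, so a legal injective coloring of $G-v$ can assign them the same color. This cannot be fixed by greedy recoloring: if $u$'s other neighbor $x$ has degree $\Delta$, then $u$ is constrained by the $\Delta-1$ colors on $N(x)\setminus\{u\}$ together with the color of $w$, i.e.\ by up to $\Delta$ colors in total, and symmetrically for $w$. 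Concretely, the middle vertex of a $3$-thread joining two $\Delta$-vertices is an instance of your configuration (ii), and no short deletion-plus-greedy argument reduces such a thread; this is precisely why the paper forbids only $4$-threads and $3$-threads having a $3$-vertex at an end, rather than all $3$-threads.

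The paper compensates for this weaker set of reducible configurations with a subtler discharging rule: each $3^+$-vertex sends charge $\frac18$ to every \emph{nearby} $2$-vertex (every interior $2$-vertex of an incident thread), not $\frac14$ to each adjacent one. Every $2$-vertex, including the middle vertex of a surviving $2$- or $3$-thread, then collects $2\cdot\frac18$ from the two $3^+$-endpoints of its thread; a $3$-vertex loses at most $6\cdot\frac18$ because its incident threads each have at most two interior vertices; and a $4^+$-vertex retains at least $\frac58 d(v)\ge\frac52$, which supplies the strict inequality forcing $\mad(G)>\frac94$. To repair your write-up you would either have to prove that $3$-threads between two $4^+$-vertices are reducible (your argument does not, and the obvious deletions fail), or adopt the paper's nearby-vertex discharging so that such threads need not be excluded at all.
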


\begin{proof}
We always have $\chi_i(G)\geq\Delta$, so we only need to prove $\chi_i(G)\leq\Delta$.
Let $G$ be a minimal counterexample.
Clearly, $G$ has no 1-vertex and $G$ has no 4-thread.  Note that $G$ also has no 3-thread with a 3-vertex at one of its ends.  We use a discharging argument, with initial charge: $\mu(v) = d(v)$.  We have one discharging rule:

\medskip
(R1) Each $3^+$-vertex gives a charge of $\frac{1}{8}$ to each nearby 2-vertex.
\medskip

Now we show that $\mu^*(v)\geq \frac94$ for each vertex $v$.

2-vertex: $\mu^*(v)\geq 2 + 2(\frac18) = \frac94$.

3-vertex: $\mu^*(v)\geq 3 - 6(\frac18) = \frac94$.

$4^+$-vertex: $\mu^*(v)\geq d(v) - 3d(v)(\frac18) = \frac58d(v) \geq\frac{5}{2} > \frac94$.
\smallskip

Hence, each vertex $v$ has charge $\mu^*(v)\geq\frac94$.
Since each $4^+$-vertex $v$ has charge $\mu^*(v)\geq\frac{5}{2}$, we have $\mad(G)>\frac94$.
\end{proof}

When $\Delta=3$, our proof is more involved.  We consider a minimal counterexample $G$.   As above, $G$ has no 1-vertex and $G$ has no 4-thread.  

We form an auxiliary graph $H$ as follows.  Let $V(H)$ be the 3-vertices of $G$.
If $u$ and $v$ are ends of a 3-thread in $G$, then add the edge $uv$ to $H$.
Suppose instead that $u$ and $v$ are ends of a 2-thread in $G$.  If one of the other threads incident to $u$ is a 3-thread and the third thread incident to $u$ is either a 2-thread or 3-thread, then add edge $uv$ to $H$.

\begin{lemma}
If $\mad(G) < \frac{42}{19}$ and $\Delta(G)=3$, then $H$ contains a cycle with a vertex $v$ such that $d_H(v)=3$.
\label{clm1}
\end{lemma}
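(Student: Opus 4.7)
The plan is to argue by contradiction. Suppose $H$ has no cycle containing a vertex of $H$-degree $3$. Then every component of $H$ is either a tree or a cycle in which every vertex has $H$-degree exactly $2$ (a ``pure cycle''), since a component containing two cycles, or a cycle with any pendant edge, would have a vertex of degree $3$ lying on one of its cycles. My strategy is to convert this structural restriction into an upper bound on the number of $2$-vertices of $G$ relative to $3$-vertices that is strong enough to exhibit a subgraph of $G$ with average degree exceeding $42/19$, contradicting $\mad(G) < 42/19$.

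For each $3$-vertex $v$, let $L(v)$ denote the sum of the lengths of the three threads at $v$, which equals the number of $2$-vertices lying on threads incident to $v$. Since each such $2$-vertex is counted at both endpoints of its thread, $\sum_{v \in V(H)} L(v) = 2 n_2^*$, where $n_2^*$ counts the $2$-vertices of $G$ that lie on some thread. The key estimate I aim to prove is
\[
  \sum_{v \in V(H)} \bigl(15 - 2L(v)\bigr) > 0.
\]
This rearranges to $n_2^* < \tfrac{15}{4} n_3$ with $n_3 = |V(H)|$, and then the subgraph $G^* \subseteq G$ obtained by removing all components of $G$ that contain no $3$-vertex has average degree $(3 n_3 + 2 n_2^*)/(n_3 + n_2^*) > 42/19$, yielding the desired contradiction.

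To prove the estimate I would handle each component $C$ of $H$ separately and verify $\sum_{v \in V(C)}(15 - 2L(v)) > 0$ via the bounds (i) $L(v) \le 6$ when $d_H(v) \le 1$, (ii) $L(v) \le 7$ when $d_H(v) = 2$, and (iii) $L(v) \le 9$ when $d_H(v) = 3$. These rest on two features of $H$: every $3$-thread is automatically an edge of $H$, so small $d_H(v)$ forbids $3$-threads at $v$; and if a $2$-thread at $v$ is not an edge of $H$, then the ``good'' condition must fail at $v$, which forbids the other two threads at $v$ from being one $3$-thread together with one $(\ge 2)$-thread. Using these bounds, every pure cycle contributes at least $1$ per vertex, an isolated vertex of $H$ contributes at least $3$, and a tree component with $k$ vertices of $H$-degree $3$ has exactly $k+2$ leaves by the standard tree identity, so its sum is at least $3(k+2) - 3k = 6$ after dropping the nonnegative contribution of its internal degree-$2$ vertices.

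The hardest part is executing the case analysis behind (i) and (ii), because the rule defining edges of $H$ checks the ``good'' condition asymmetrically at just one endpoint of a $2$-thread; one has to show that a $2$-thread at $v$ that is not an edge of $H$ either forces the other two threads at $v$ away from the $(3,\ge 2)$ pattern, or pushes the obstruction to the other endpoint, and then use this cascade to limit $L(v)$. Once these $L$-bounds are in hand, the summation over components and the resulting average-degree contradiction are routine.
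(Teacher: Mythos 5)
Your proposal is correct, and it rests on the same key numerical facts as the paper, reached by a reorganized route. The paper argues directly: it converts $\mad(G)<\frac{42}{19}$ into the statement that the average number of nearby $2$-vertices per $3$-vertex exceeds $\frac{15}{2}$, passes to the subgraph $\widehat{H}$ of non-isolated vertices of $H$ (checking that only vertices with $8$ or $9$ nearby $2$-vertices lie above that average and that such vertices are never isolated, so the weighted average can only increase), and then uses exactly your bounds --- in the form $d_{\widehat H}(v)\ge \frac23 i-3$, where $i$ is the number of nearby $2$-vertices --- to conclude that $\widehat H$ has average degree greater than $2$ and hence contains the desired cycle. Your contrapositive version replaces that last step with a component-by-component count (the tree leaf identity $n_1=n_3+2$ plus the pure-cycle case), and it sidesteps the $\widehat H$/weighted-average-monotonicity step entirely, since isolated vertices of $H$ simply contribute $+3$ to your sum; that is a genuine small simplification. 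One reassurance: the case analysis behind (i) and (ii) is shorter than you fear. To lower-bound $d_H(v)$ you only ever need that the $2$-thread condition holding \emph{at $v$} already forces the edge into $H$ (the rule fires if it is satisfied at either end), so no cascade to the other endpoint is required: thread lengths $(3,2,2)$ or $(3,3,2)$ at $v$ immediately give $d_H(v)=3$, and $(3,3,1)$ gives $d_H(v)\ge 2$, which is all that (i) and (ii) need. In a write-up you should make explicit that $\Delta(H)\le 3$ (so a cycle vertex of degree at least $3$ has degree exactly $3$), that $n_3\ge 1$ because $\Delta(G)=3$, and --- as the paper also implicitly uses --- that in a minimal counterexample every $2$-vertex lies on a thread, so $n_2^*$ is indeed the number of $2$-vertices of $G^*$.
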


\begin{proof}
To prove Lemma~\ref{clm1}, it is sufficient to show that $H$ (or some subgraph of $H$) has average degree greater than 2. 

Form subgraph $\widehat{H}$ from $H$ by deleting all of the isolated vertices in $H$.
For $i\in\{0,1,\ldots,9\}$, let $a_i$ denote the number of 3-vertices of $G$ that have exactly $i$ nearby 2-vertices and let $\widehat{a}_i$ denote the number of 3-vertices of $G$ that have exactly $i$ nearby 2-vertices and also have a corresponding vertex in $\widehat{H}$.
Let $n$ and $\widehat{n}$ denote the number of vertices in $H$ and $\widehat{H}$, respectively.  Note that $\sum_{i=0}^9a_i = n$ and $\sum_{i=0}^9\widehat{a}_i = \widehat{n}$.  We now consider two weighted averages of the integers $0,1,\ldots, 9$; the first average uses the weights $a_i$ and the second average uses the weights $\widehat{a}_i$.

Let $V_2$ and $V_3$ denote the number of 2-vertices and 3-vertices in $G$.
Since $\mad(G) < \frac{42}{19}$, by simple algebra we deduce that $2V_2/V_3 > \frac{15}2$.  By rewriting this inequality in terms of the $a_i$s, we get: $\frac1n\sum_{i=0}^9a_ii > \frac{15}2$.  
Note that $8$ and $9$ are the only numbers in this weighted average that are larger than the average.
Thus, since $\widehat{a}_i=a_i$ for $i\in\{8,9\}$ and $\widehat{a}_i\leq a_i$ for $0\leq i\leq 7$, we also have $\frac1{\widehat{n}}\sum_{i=0}^9\widehat{a}_ii\geq\frac1n\sum_{i=0}^9a_ii > \frac{15}2$.  We need one more inequality, which we prove in the next paragraph.

The table below lists the values of three quantities: $i$, the minimum degree of a vertex in $\widehat{H}$ that has $i$ nearby 2-vertices in $G$, and the expression $2i/3-3$.  Note that for all values of $i$, we have $d_{\widehat{H}}(v)\geq 2i/3-3$.  We end the table at $i=4$, since thereafter $d_{\widehat{H}}(v)=1$ and $2i/3-3$ is negative.
\begin{center}
\begin{tabular}{c|c|c|c|c|c|c}
$i$ & 9 &  8  &  7  &  6  &  5  &  4  \\ \hline
$d_{\widehat{H}}(v)$ & 3 &  3  &  2  &  1  &  1  &  1 \\
$2i/3-3$ & 3 & 7/3 & 5/3 &  1  & 1/3 &-1/3 
\end{tabular}
\end{center}

By taking the average over all vertices in $\widehat{H}$ of the inequality $d_{\widehat{H}}(v)\geq2i/3-3$, we get the inequality 
\[
\frac1{\widehat{n}}\sum_{v\in V(\widehat{H})}d_{\widehat{H}}(v)\geq\frac1{\widehat{n}}\sum_{i=0}^9\widehat{a}_i\left(\frac23i-3\right).
\]
By expanding this second sum into a difference of two sums, then substituting the values given above for these two sums, we conclude that the average degree of $\widehat{H}$ is greater than 2.
This finishes the proof of Lemma~\ref{clm1}.
\end{proof}

Now we use Lemma~\ref{clm1} to show that $\chi_i(G) = 3$. 

\begin{lemma}
Let $G$ be a graph with $\Delta=3$.  If $\mad(G) < \frac{42}{19}$, then $\chi_i(G)= 3$.
\label{4219delta3}
\end{lemma}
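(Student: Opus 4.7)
The plan is to mirror the proof of Lemma~\ref{52delta3}, using the auxiliary graph $H$ from Lemma~\ref{clm1} in place of $G_{23}$. Take $G$ to be a minimal counterexample; as usual, $G$ has no $1$-vertex and no $4$-thread, because each such configuration is trivially reducible by deletion plus greedy extension. Lemma~\ref{clm1} then produces a cycle $C_H\subseteq H$ and a vertex $u\in V(C_H)$ with $d_H(u)=3$, and the definition of $H$ forces at least one of the three threads incident to $u$ in $G$ to be a $3$-thread (otherwise none of the three $2$-threads at $u$ would meet the ``neighboring $3$-thread'' requirement).

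Next, I would lift $C_H$ to a cycle $C$ in $G$ by replacing each $H$-edge with its corresponding $2$- or $3$-thread, and let $P$ be the third thread incident to $u$ with far endpoint $w$. Define the reducible set $J := V(C)\cup\bigl(V(P)\setminus\{w\}\bigr)$; omitting $w$ leaves it available outside $J$ as a source of slack, exactly as $w$ is kept outside $J$ in Lemma~\ref{52delta3}. By minimality, color $G-J$ injectively with three colors, and for each $v\in J$ let $L'(v)\subseteq\{1,2,3\}$ be the residual list after the precoloring. A direct local check using $\Delta(G)=3$ and the absence of $4$-threads gives $|L'(v)|\ge d_{J^{(2)}}(v)$ for every $v\in J$; moreover, the guaranteed $3$-thread at $u$ contributes an interior $2$-vertex $y$ with $|L'(y)|=3>2=d_{J^{(2)}}(y)$.

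The extension to $J$ proceeds component by component in $J^{(2)}$. The key observation is that $G^{(2)}$-edges connect vertices at $G$-distance $2$, so the cycle $C$ splits in $J^{(2)}$ according to position-parity along $C$: either two even cycles of length $|V(C)|/2$ (when $|V(C)|$ is even) or a single cycle of length $|V(C)|$ (when $|V(C)|$ is odd), enlarged by pendants contributed by $P$ and by the triangle on $N_G(u)$, whose three vertices all share one parity and therefore merge into one parity cycle along a shared edge. In every configuration the resulting block is either an even cycle of length at least $6$ or a theta-type graph obtained by gluing the triangle at $u$ to a cycle along an edge, and in either case the block is neither a complete graph nor an odd cycle; Theorem~B then supplies the needed extension. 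For the component containing $y$, Theorem~A(a) alternatively applies using the strict slack at $y$. This yields an injective $3$-coloring of $G$ and contradicts minimality.

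The main obstacle is the casework: the various mixtures of $2$- and $3$-threads along $C_H$ and the two possible types for $P$ produce several block decompositions of $J^{(2)}$, and in each one must verify both the degree condition $|L'(v)|\ge d_{J^{(2)}}(v)$ and the existence of a block that is neither complete nor an odd cycle. The asymmetric condition in the definition of $H$ (that a $2$-thread at $u$ yields an $H$-edge only when some other thread at $u$ is a $3$-thread) is precisely what supplies the extra structure near $u$ needed to rule out the pathological all-$K_3$-or-odd-cycle block decompositions, and faithfully tracking this asymmetry through the casework is the delicate ingredient of the argument.
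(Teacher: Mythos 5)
Your skeleton is the paper's: lift the cycle from Lemma~\ref{clm1} to a cycle $C'$ in $G$, remove a set $J$ built from $C'$ and the third thread at the degree-$3$ vertex, color the rest by minimality, and finish with Theorems A and B. But two of your asserted steps fail. First, the ``direct local check'' that $|L'(v)|\ge d_{J^{(2)}}(v)$ for every $v\in J$ is false: a $3$-vertex $t$ on $C'$ (other than $u$) whose off-cycle neighbor is itself a $3$-vertex has $d_{G^{(2)}}(t)=4$, so two of its $G^{(2)}$-neighbors lie outside $J$ and are already colored, leaving $|L'(t)|$ as small as $1$ while $d_{J^{(2)}}(t)=2$. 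The paper has to work around exactly this: it prunes from $J^{(2)}$ every vertex of $G^{(2)}$-degree $2$ or $4$ (observing that a degree-$4$ vertex may be colored last because its two $J^{(2)}$-neighbors on $C'$ have $G^{(2)}$-degree $2$), and only then applies Theorems A and B to the resulting graph $\widehat{K}$. Without that reduction, the list-size hypotheses of Theorems A and B are simply not met on the component carrying the triangle on $N_G(u)$.

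Second, your only source of slack in the component containing $u$ is the claim that $d_H(u)=3$ forces a $3$-thread at $u$. That does not follow from the definition of $H$: a $2$-thread becomes an $H$-edge when the ``one other $3$-thread plus one other $2^+$-thread'' condition holds at either endpoint, so $u$ can acquire three $H$-edges through three $2$-threads whose far endpoints supply the condition. When that happens your slack vertex $y$ does not exist, and the component containing $u$ can be a tight odd cycle (possibly with $K_2$ pendant blocks), where both Theorem A(a) and Theorem B fail. The paper's mechanism for this last case is different and essential: it locates a vertex $z$ outside $J$ with $d_{G^{(2)}}(z)=2$ that is adjacent to $v$ in $G^{(2)}$, \emph{uncolors} $z$ to create a third available color at $v$, applies Theorem A(a) to the cycle component, and recolors $z$ afterwards. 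That uncoloring trick, together with the $\widehat{K}$ pruning, is the substance of the proof; your proposal defers precisely this casework (``the main obstacle is the casework'') rather than carrying it out.
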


\begin{proof}
 Let $\C$ be a cycle in $H$ that contains a vertex $v$ with $d_H(v)=3$.  Let $\C'$ be the cycle in $G$ that corresponds to $\C$; i.e. $\C'$ in $G$ passes through all the vertices corresponding to vertices of $\C$ in the same order that they appear on $\C$; furthermore, between each pair of successive 3-vertices on $\C'$ there are either two or three 2-vertices.  Let $J$ be the subgraph of $G$ consisting of cycle $\C'$ in $G$, together with the neighbor $w$ of $v$ that is not on $\C'$.  
By assumption, $G^{(2)}-J^{(2)}$ has a proper 3-coloring.

Note that each vertex $u$ in $J$ satisfies $d_{G^{(2)}}(u) \leq 3$ unless $u$ is a 3-vertex and is also adjacent to a 3-vertex not on $J$.  In that case, $d_{G^{(2)}}(u)=4$; however, then two of the vertices on $\C'$ that are adjacent to $u$ in $G^{(2)}$ have degree 2\ in $G^{(2)}$, and hence, these two vertices can be colored after all of their neighbors.  

To simplify notation, we now speak of finding a proper vertex coloring of $G^{(2)}$.  Let $K = J^{(2)}$.  
We now delete from $K$ all vertices with degree 2 or 4\ in $G^{(2)}$, since they can be colored last; call the resulting graph $\widehat{K}$.  Our goal is to extend the 3-coloring of $G^{(2)}-K$ to $\widehat{K}$; we can then further extend this 3-coloring to $K$.

Observe that each vertex $u$ in $\widehat{K}$ has list size at least 2. Furthermore, all vertices of $\widehat{K}$ have at most two uncolored neighbors in $\widehat{K}$ except for the two vertices $x$ and $y$ that are adjacent on $\C'$ to $v$; however, $d_{\widehat{K}}(x) = d_{\widehat{K}}(y)=3$ and $x$ and $y$ both have three available colors.  
Hence, for every vertex $u$ in $\widehat{K}$, the number of available colors is no smaller than its degree $d_{\widehat{K}}(u)$.  We use this fact as follows.

If any component of $\widehat{K}$ is a path, then we can clearly color each vertex of the component from its available colors, by Theorem~A part (a).  In that case, each component of $\widehat{K}$ contains a vertex $u$ with number of colors greater than $d_{\widehat{K}}(u)$; so by Theorem~A part (a), we can color each vertex of $\widehat{K}$ from its available colors. 
If instead $\widehat{K}$ is a single component, then since the single component is neither a clique nor an odd cycle, we can color all of $\widehat{K}$, by Theorem~B.  

Hence, we only need to consider the case when $\widehat{K}$ has two components: one contains a cycle, where two adjacent vertices have a common neighbor not on the cycle (this is $w$, the off-cycle neighbor of $v$); the other component is a cycle containing $v$.  

Because the first component is neither an odd cycle nor a clique, we can color it by Theorem~B.  
 
Now observe that since $d_H(v)=3$ (and $v$ is not adjacent to a 3-thread on $\C'$), we know that $v$ is adjacent in $G^{(2)}$ to a vertex $z$ not on $C'$ such that $d_{G^{(2)}}(z)=2$.  By uncoloring $z$, we make a third color available at $v$, so we can extend the coloring to the second component of $\widehat{K}$, using Theorem~A part (a).  Lastly, we recolor $z$.
\end{proof}

\section{Planar graphs with low injective chromatic numbers}

In this section, we prove Theorem~\ref{girththm}.  The condition $\Delta\ge 4$ allows us to get better girth conditions than Theorem~\ref{planar-girth}, but not much.   

We prove Theorem~\ref{girththm}(a) first,  and for convenience, we restate the theorem.

\begin{theorem5A}
If $G$ is planar, $\Delta(G)\geq 4$, and $g(G)\geq 9$, then $\chi_i(G) \leq\Delta + 1$.
\end{theorem5A}

\begin{proof}
It is easy to see that $G$ has no 1-vertex and $G$ has no 2-thread.
We also need one more reducible configuration.  Let $u_1,u_2,u_3,u_4,u_5$ be five consecutive vertices along a face $f$.  Suppose that $d(u_1)=d(u_3)=d(u_5)=2$ and $d(u_2)=d(u_4)=3$ and the neighbor of $u_2$ not on $f$ has degree at most 3.  We call this subgraph $H$ and we show that $H$ is a reducible configuration, as follows.  By assumption, $G-u_3$ has an injective coloring with $\Delta+1$ colors; we now modify this coloring to get an injective coloring of $G$.  First uncolor vertices $u_1, u_2, u_4,$ and $u_5$.  Now color the uncolored vertices in the order: $u_2, u_4, u_1, u_5, u_3$.

We use a discharging argument with intial charge $\mu(v) = d(v) - 4$ and $\mu(f) = d(f) - 4$.  We use the following discharging rules.

\begin{itemize}
\item[(R1)] Each face gives charge 1 to each 2-vertex and gives charge 1/3 to each 3-vertex.
\item[(R2)] If face $f$ contains the degree sequence $(4^+,3,2,3,4^+)$, then $f$ gives charge 1/3 to the face adjacent across the 2-vertex. 
\end{itemize}

Now we show that $\mu^*(x)\geq 0$ for each $x\in V(G)\cup F(G)$.
\smallskip

$d(v)=2$: $\mu^*(v)= -2 + 2(1) = 0$.

$d(v)=3$: $\mu^*(v) = -1 + 3(1/3) = 0$.

$d(v)\geq 4$: $\mu^*(v) = \mu(v)\geq 0$.

To argue intuitively without handling (R2) separately, observe that
wherever $(4,3,2,3,4)$ appears we could replace it with $(4,2,4,2,4)$ without creating a 2-thread; after this replacement, face $f$ gives away 1/3 more charge (to account for (R2), so we only consider (R1)).

For each face $f$, let $t_2, t_3, t_4$ denote the number of 2-vertices, 3-vertices, and $4^+$-vertices on $f$, respectively.
The charge of a face $f$ is $\mu^*(f) = d(f) - 4 - t_2 - 1/3t_3 = t_4 + 2/3t_3 - 4$.
If a face has negative charge, then $t_4 + 2/3t_3 < 4$. This implies $3/2t_4 + t_3 < 6$, and hence $t_4+t_3\leq 5$.
Since $G$ contains no 2-threads, $t_2 \leq t_3 + t_4$.
So, if a face has negative charge, $d(f) = t_2 + t_3 + t_4 \leq 2(t_3 + t_4) \leq 10$.  Hence, we only need to verify that $\mu^*(f)\geq 0$ for faces of length at most 10; since $girth(G)\geq 9$, we have only two cases: $d(f)=10$ and $d(f)=9$.

\smallskip

{\bf Case 1: face $f$ of length 10}:
If $\mu^*(f) < 0$, then $t_2 = 5$, $t_3 \geq4$, and $t_4 \leq 1$.
So our degree sequence around $f$ must look like either
(a)~$(2,4^+,2,3,2,3,2,3,2,3)$ or
(b)~$(2,3,2,3,2,3,2,3,2,3)$.

{\bf Case 1a)~$(2,4^+,2,3,2,3,2,3,2,3)$}: 
Let $u_1$ and $u_2$ be the vertices not on $f$ adjacent to the second and third vertices on $f$ of degree 3. 
If $G$ does not contain reducible configuration $H$, then $d(u_1)\geq 4$ and $d(u_2)\geq 4$; but then $f$ receives charge 1/3 by (R2), so $\mu^*(f)\geq 0$.

{\bf Case 1b)~$(2,3,2,3,2,3,2,3,2,3)$}: If any neighbor not on $f$ of a 3-vertex on $f$ has degree at most 3, then $G$ contains reducible configuration $H$.  If all such neighbors have degree at least 4, then $f$ receives charge 1/3 from each adjacent face, so $\mu^*(f)=t_4+\frac23t_3 - 4 + (\frac13)5 = 0 + (\frac23)5 - 4 + (\frac13)5 = 1 > 0$.
\smallskip

{\bf Case 2: face $f$ of length 9}:
If $\mu^*(f) < 0$, then $t_2 = 4$, $t_3 \geq4$, and $t_4 \leq 1$.

Our degree sequence around $f$, beginning and ending with vertices of degree at least 3, must look like one of the four following: (a)~$(3,2,3,2,3,2,3,2,3)$, (b)~$(4^+,2,3,2,3,2,3,2,3)$, (c)~$(3,2,4^+,2,3,2,3,2,3)$, or (d)~$(3,2,3,2,4^+,2,3,2,3)$.

{\bf Case 2a)~$(3,2,3,2,3,2,3,2,3)$}: Because $f$ contains the degree sequence $(2,3,2,3,2,3,2)$,
either $G$ contains the reducible configuration $H$ or $f$ receives
a charge of $\frac13$ from at least two faces.  Hence $\mu^*(f)\geq \frac23(5)-4+\frac13(2)=0$.

{\bf Cases 2b)~$(4^+,2,3,2,3,2,3,2,3)$} and {\bf 2c)~$(3,2,4^+,2,3,2,3,2,3)$}: Again $f$ contains the degree sequence $(2,3,2,3,2)$.  So in each case, if $f$
does not contain the reducible configuration $H$, then $f$ receives a
charge of $\frac13$ from some adjacent face; hence $\mu^*(f)\geq 1+\frac23(4)-4+\frac13(1) = 0$.

{\bf Case 2d)~$(3,2,3,2,4^+,2,3,2,3)$}: 
Let $v_1, w_1, v_2, w_2, v_3, w_3, v_4, w_4, v_5$ denote the vertices on $f$, in order around the face, beginning and ending with 3-vertices. 
If both $v_1$ and $v_2$ are adjacent to vertices of degree at least 4, then $f$ receives a charge of $\frac13$ from an adjacent face, by (R2).  
In this case $\mu^*(f)\geq 1+\frac23(4)-4+\frac13(1) = 0$.
Conversely, we will show that if either $v_1$ or $v_2$ is not adjacent to any vertex of degree at least 4, then $G$ contains a reducible configuration.  

Let $u_1$ and $u_2$ denote the neighbors of $v_1$ and $v_2$ not on $f$.  By minimality, we have an injective coloring of $G-\{w_1,v_2,w_2\}$ with $\Delta+1$ colors.  If $d(u_1) < 4$, then we finish as follows: uncolor $v_1$ and $w_4$, now color $w_1$, $w_2$, $v_2$, $v_1$, $w_4$.
If instead $d(u_2) < 4$, then we finish by coloring $w_1$, $w_2$, $v_2$.
\end{proof}

Now we prove Theorem~\ref{girththm}(b); for convenience, we restate it.

\begin{theorem5B}\label{girth-13}
If $G$ is planar, $\Delta(G)\ge 4$, and $g(G)\geq 13$, then $\chi_i(G) =\Delta$.
\end{theorem5B}

\begin{proof}

Suppose the theorem is false; let $G$ be a minimal counterexample.
Below we note six configurations that must not appear in $G$.
In each case, we can delete the configuration $H$, color $G-H$ (by the minimality of $G$), and extend the coloring to $H$ greedily.

(RC1) $G$ contains no $1$-vertices.

(RC2) $G$ contains no $4$-threads.

(RC3) $G$ contains no $3$-thread with one end having degree $3$.

(RC4) $G$ contains no $2$-threads with both ends having degree $3$.

(RC5) $G$ contains no $3$-vertex that is incident to one $1$-thread and two $2$-threads.

(RC6) $G$ contains no $3$-vertex that is incident to one $2$-thread
and two $1$-threads such that the other end of the one $1$-thread has
degree $3$.

\medskip

For a specified face $f$, let $t_2, t_3$, and $t_4$ be the number of
vertices incident to $f$ of degrees $2$, $3$, and at least $4$. Then
by (RC2) and (RC3), for any face $f$, $t_2\le 2t_3+3t_4$, equality
holds only if every $4^+$-vertex is followed by a $3$-thread and
every $3$-vertex is followed by a $2$-thread. Thus if $t_3>0$, the
equality does not hold. So

\begin{equation}
\label{order-inequality}
t_2\le 2t_3+3t_4;  \mbox{ and if $t_3>0$, then $t_2<2t_3+3t_4$.}
\end{equation}

\medskip

We use a discharging argument. Let the initial charge be
$\mu(x)=d(x)-4$ for $x\in V(G)\cup F(G)$, where $d(x)$ is the degree
of vertex $x$ or the length of face $x$. Then by Euler Formula,
\begin{equation}\label{charge}
 \sum_{x\in V\cup F}\mu(x)=-8.
\end{equation}

We will distribute the charges of the vertices and faces in two
phases. In Phase I, we use a simple discharging rule and show that only three
types of faces have negative charge. In Phase II, we introduce two more discharging
rules and show that the final charge of every face and every vertex is 
nonnegative. We thus get a contradiction to equation (\ref{charge}).
\bigskip

{\bf Discharging Phase I}

We use the following discharging rule.

\begin{enumerate}
\item[(R1)]
Each face $f$ gives charge $\frac13$ to each incident
$3$-vertex and gives charge $1$ to each incident $2$-vertex.
\end{enumerate}

{\bf Remark:} Another way to state this discharging rule is that $f$ gives charge 1 to each vertex, and every $3$-vertex returns charge $\frac23$ and every $4^+$-vertex
returns charge $1$. Thus each vertex has final charge $0$, and the final
charge of each face $f$ is
\begin{equation}\label{final-charge}
\mu^*(f)=\frac23t_3+t_4-4.
 \end{equation}

\begin{clm}\label{bad-faces}
If $\mu^*(f)<0$, then $f$ must have one of the following degree sequences:\\
(a) $(4^+,2,2,2,4^+,2,2,2,4^+,2,2,3,2,2)$\\
(b) $(4^+,2,2,2,4^+,2,2,2,4^+,2,3,2,2)$\\
(c) $(4^+,2,2,2,4^+,2,2,4^+,2,2,3,2,2)$.
\end{clm}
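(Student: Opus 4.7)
The plan is to translate $\mu^*(f)<0$ from~(\ref{final-charge}) into the arithmetic inequality $2t_3+3t_4<12$, and then combine the girth bound $d(f)\geq 13$ with the arc-length restrictions from (RC2)--(RC4) to eliminate all but the three listed cyclic degree sequences. Recall that the $3^+$-vertices around $f$ partition its boundary into arcs of $2$-vertices (the threads of $G$). The bounds I will use are: every arc has length at most $3$ by (RC2); every arc with a degree-$3$ endpoint has length at most $2$ by (RC3); and every arc with two degree-$3$ endpoints has length at most $1$ by (RC4).

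First I would dispose of $t_4\leq 2$. Since $2t_3+3t_4<12$ leaves only finitely many pairs $(t_3,t_4)$, I would, for each such pair, enumerate the cyclic arrangements of the $t_3+t_4$ distinguished vertices on $f$ and upper-bound $t_2$ as the sum of the maximum admissible arc lengths. In every subcase, the resulting bound on $d(f)=t_2+t_3+t_4$ will fall short of $13$, contradicting the girth. For example, when $t_4=0$ every arc is of type $3$-$3$ and hence has length at most $1$, so $t_2\leq t_3\leq 5$ and $d(f)\leq 10$; and when $(t_3,t_4)=(2,2)$ the two cyclic patterns $3,3,4^+,4^+$ and $3,4^+,3,4^+$ cap $t_2$ at $1+2+3+2=8$ and $2+2+2+2=8$ respectively, so $d(f)\leq 12$.

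For $t_4=3$ the inequality forces $t_3\leq 1$. If $t_3=0$, the three $4^+$-$4^+$ arcs each have length at most $3$, giving $d(f)\leq 12$, again impossible. Hence $t_3=1$, and $f$ has exactly four arcs whose lengths $(\ell_1,\ell_2,\ell_3,\ell_4)$ satisfy $\ell_1,\ell_4\leq 2$ (the two arcs incident to the unique $3$-vertex) and $\ell_2,\ell_3\leq 3$ (the two $4^+$-$4^+$ arcs). Since $d(f)\geq 13$ requires $t_2=\sum\ell_i\geq 9$ and the maximum possible sum is $10$, a short enumeration shows that the length tuples with $\sum\ell_i\geq 9$, up to the reflective symmetry of the cyclic arrangement, are exactly $(2,3,3,2)$, $(2,3,3,1)$, and $(2,3,2,2)$; these read off as the three degree sequences (a), (b), and (c).

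The main obstacle is bookkeeping rather than insight: every individual calculation is elementary, but the mixed cases (those with both $3$-vertices and $4^+$-vertices on $f$) admit several cyclic arrangements that must each be checked, so some care is needed to make the case split exhaustive. No reducible configuration beyond (RC2)--(RC4) is required here; the finer configurations (RC5) and (RC6) will be invoked only in the subsequent discharging phase to redistribute charge onto the three surviving face types.
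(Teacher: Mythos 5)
Your proposal is correct and follows essentially the same route as the paper: both convert $\mu^*(f)<0$ via equation~(\ref{final-charge}) into the bound $2t_3+3t_4\le 11$, then use the thread-length caps from (RC2)--(RC4) together with $d(f)\ge 13$ to eliminate all but the $(t_3,t_4)=(1,3)$ arrangements with $t_2\in\{9,10\}$, which yield exactly (a), (b), (c). The only difference is bookkeeping: you enumerate $(t_3,t_4)$ pairs and sum maximal arc lengths per cyclic arrangement, while the paper pivots on $t_3+t_4$ and $d(f)$ using inequality~(\ref{order-inequality}); the content is the same.
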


\begin{proof}
Consider a face $f$ with negative charge $\mu^*(f)$. Note that $\mu^*(f)\le
-1/3$; by 
equation (\ref{final-charge}) and inequality (\ref{order-inequality}), we
have $-1/3\ge \mu^*(f)=\frac23t_3+t_4-4\ge \frac13t_2-4$. We rewrite these inequalities as:

\begin{equation}\label{charge-inequality}
t_2\le 2t_3+3t_4\le 11.
\end{equation}
\vspace{-.1in}

By inequality (\ref{charge-inequality}), we know that $t_3+t_4\le 5$. If $t_3+t_4\le 3$,
then $t_2\le 3(t_3+t_4)\le 9$, and hence $d(f)=t_2+(t_3+t_4)\le 9+3 = 12$. Since $girth(G) \geq 13$, this is a contradiction.  So we must have $4\le t_3+t_4\le 5$. 
From inequality (\ref{charge-inequality}), note that $t_4<4$; thus $t_3>0$.

If $t_3+t_4=5$, then inequality (\ref{charge-inequality}) implies that $t_4\le 1$, and thus $t_3\ge 4$. By (RC2),
(RC3), and (RC4), $f$ contains at most two $2$-threads, and three
$1$-threads; thus $d(f)\le 2(2)+3(1)+5=12$. Again, this contradicts $girth(G)\geq 13$, so we must have 
$t_3+t_4=4$.
If $t_2=11$, then $t_2=2t_3+3t_4$. Now inequality (\ref{order-inequality}) implies that $t_3=0$, which is a contradiction. So instead $t_2\le 10$. Combining this inequality with $t_3+t_4=4$, we have $d(f)=t_2+(t_3+t_4)\le 14$.  We now consider two cases: $d(f)=14$ and $d(f)=13$.

If $d(f)=14$, then $t_2=d(f)-(t_3+t_4)=10$. Since $t_3>0$, inequality (\ref{order-inequality})
yields $2t_3+3t_4>10$. So $t_3=1$ and $t_4=3$. By (RC2) and (RC3),
the degree sequence of $f$ must be
$(4^+,2,2,2,4^+,2,2,2,4^+,2,2,3,2,2)$.

If $d(f)=13$, then $t_2=d(f)-(t_3+t_4)=9$. Since $t_3>0$,  inequality (\ref{order-inequality})
yields $2t_3+3t_4>9$. Now $(t_3,t_4)\in \{(2,2),(1,3)\}$.  If
$t_3=t_4=2$, then we have one of the following two cases.
If the two 3-vertices are pseudo-adjacent, then $f$ has at most one 1-thread, two 2-threads, and one 3-thread; so $d(f)\leq 1(1) + 2(2) + 1(3) + 4=12$.
If the two 3-vertices are not pseudo-adjacent, then $f$ has at most four 2-threads, so $d(f)\leq 4(2)+4=12$.  Both of these cases contradict $girth(G)\geq 13$, so we must have $t_4=3$ and $t_3=1$. 
By (RC2) and (RC3), the degree sequence of $f$ must be
$(4^+,2,2,2,4^+,2,2,2,4^+,2,3,2,2)$ or
$(4^+,2,2,2,4^+,2,2,4^+,2,2,3,2,2)$.
\end{proof}

{\bf Discharging Phase II}

Note that each type of bad face listed in Claim~\ref{bad-faces} ends Phase I with charge $-\frac13$.
We now introduce two new discharging rules to send an additional charge of $\frac13$ to these bad faces. 
 
A {\em type-1 $3$-vertex} $u$ is a $3$-vertex that is incident with
one $2$-thread, and two $1$-threads, with the other ends of the
$1$-threads each having degree at least $4$. The vertex $u$ is called a
{\em weak vertex} in the face incident with the two $1$-threads and is called
a {\em strong vertex} in the other two faces incident to $u$.

A {\em type-2 $3$-vertex} $u$ is a $3$-vertex that is incident with
one $0$-thread, one $2$-thread, and one $1^+$-thread, with the other
ends of the $2$-thread and $1^+$-thread having degree at least $4$.
The vertex $u$ is called a {\em slim vertex} in the faces incident
with the $0$-thread and is called a {\em fat vertex} in the other face
incident to $u$.

In our second discharging phase, we use the following two discharging rules.

\begin{enumerate}
\item[(R2)] Each face gives charge $\frac23$ to each of its weak vertices and
gives charge $\frac16$ to each of its slim vertices.

\item[(R3)] Each face receives charge $\frac13$ from each of its strong vertices
and receives charge $\frac13$ from each of its fat vertices.
\end{enumerate}

Let $\mu^{**}(f)$ be the charge after the second discharging phase.  
Let $t_3'(f)$ be the  number of non-special $3$-vertices on $f$, i.e. 3-vertices on $f$ that are not: slim, fat, strong, or weak. 
Beginning with equation (3) and applying rules (R2) and (R3), we write the final charge of a face $f$ as:
\[
\mu^{**}(f)=\frac23t_3'+t_4-4 + 0\cdot \#(weak)+\frac12\cdot \#(slim)+1\cdot \#(strong)+1\cdot \#(fat).
\]

Recall that each vertex had nonnegative charge at the end of Phase I.  Since rules (R2) and (R3) do not change the charge at any vertex, it is clear that every vertex has nonnegative final charge. Now we
will show that every face also has nonnegative final charge. This will contradict equation \ref{charge}.

For a face $f$ with negative charge after Phase I, by
Claim~\ref{bad-faces}, it contains a $3$-vertex $v$ incident to a
$2$-thread and a $1^+$-thread with the other ends having degree at
least $4$. By (RC3) and (RC5), the third thread incident to $v$
is either a $0$-thread, or a $1$-thread. If it is a $0$-thread, then
$v$ is a fat vertex; if it is a $1$-thread, then $v$ is a strong vertex.
In each case, either rule (R2) or (R3) sends an additional charge of $\frac13$ to $f$.  Since $\mu^*(f) = -\frac13$, we conclude that $\mu^{**}(f) = 0$.

If a face has nonnegative charge after Phase I and contains no weak
or slim vertices, then it does not give away charge, and therefore remains
nonnegative. So we only consider the faces containing weak and slim
vertices. Before proceeding, we have the following claims.
Note that even after applying (R2) and (R3), the net charge given from each 3-vertex to each face is nonnegative; we use this fact implicitly when we prove Claims~\ref{4-segment} and~\ref{8-segment}.

\begin{clm}\label{4-segment}
If a face $f$ contains a path $P=u\ldots v$ with $d(u),d(v)\ge 3$,
and if $|V(P)-\{u,v\}|\ge 4$, then $f$ receives a total charge of at least 1 
from the vertices in $V(P)-\{u,v\}$.
\end{clm}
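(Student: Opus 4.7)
The plan is to prove Claim~\ref{4-segment} by a case analysis on the degree sequence of the interior vertices of $P$. I will use the decomposition implicit in the formula for $\mu^{**}(f)$ just derived: each $x\in V(f)$ contributes a nonnegative quantity $c(x,f)$ with $\mu^{**}(f)=-4+\sum_{x\in V(f)}c(x,f)$, where $c(x,f)=0$ for a $2$-vertex or weak $3$-vertex, $\frac{1}{2}$ for a slim $3$-vertex, $\frac{2}{3}$ for a non-special $3$-vertex, and $1$ for a strong or fat $3$-vertex, or any $4^+$-vertex. Since these contributions are nonnegative, it suffices to treat the case $|V(P)-\{u,v\}|=4$.

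First, (RC2) forbids four consecutive $2$-vertices, so at least one of the four interior vertices has degree at least $3$. If any interior vertex has degree $\geq 4$, its contribution alone is $1$ and the claim holds; so I may assume every interior $3^+$-vertex is a $3$-vertex, and I let $k\ge 1$ denote their number.

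For $k\geq 2$ I will enumerate the placements of the $k$ interior $3$-vertices and $4-k$ interior $2$-vertices along $P$, using (RC3) to exclude a $3$-thread incident to an interior $3$-vertex and (RC4) to exclude a $2$-thread between two interior $3$-vertices. I will then observe that at most one interior $3$-vertex can be weak on $f$: weakness at a vertex $w$ requires both on-$f$ neighbors of $w$ to be $2$-vertices lying on $1$-threads to $4^+$-vertices, and two such vertices in the interior cannot coexist on a segment of only four interior vertices bounded by $3^+$-vertices $u,v$. Hence at least two interior $3$-vertices contribute $\geq\frac{1}{2}$ each, so the total is at least $1$; each subcase is a short check using (RC4) and (RC6) to rule out degenerate layouts.

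The crux is $k=1$. The unique interior $3$-vertex $w$ together with three interior $2$-vertices must, by (RC3), have the $2$-vertices split as $(2,1)$ or $(1,2)$ along $P$, so $w$ is incident on $f$ to exactly one $2$-thread and one $1$-thread; (RC4) forces the far end of the $2$-thread to be a $4^+$-vertex. I will then analyze the off-$f$ third thread of $w$: (RC3) forbids a $3^+$-thread, (RC5) forbids a $2$-thread (as it would give $w$ the forbidden ``one $1$-thread and two $2$-threads'' pattern), and (RC6) forbids a $1$-thread ending at a $3$-vertex as well as forcing the on-$f$ $1$-thread's far endpoint to be a $4^+$-vertex. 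Thus $w$ is either type-$1$ (so strong on $f$) or type-$2$ (so fat on $f$), and contributes $1$. The main obstacle is this $k=1$ analysis, where the joint use of (RC5) and (RC6) must be carried out carefully to rule out every thread structure that would leave $w$ non-special; the cases $k\ge 2$ are a more routine enumeration once the ``at most one weak interior vertex'' observation is in hand.
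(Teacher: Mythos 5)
Your overall strategy is the paper's: the same per-vertex charge decomposition, disposing of $4^+$-vertices first, and the same fat/strong dichotomy for a lone interior $3$-vertex via (RC3) and (RC5). However, two steps do not hold up. First, the reduction to $|V(P)-\{u,v\}|=4$ is unjustified. Nonnegativity of the contributions lets you discard vertices only if you can exhibit a subpath of $P$ with $3^+$-endpoints and exactly four interior vertices, and such a window need not exist: if the interior degree sequence is $(2,2,3,2,2)$, every set of four consecutive interior vertices is immediately preceded or followed by a $2$-vertex, so your base case never applies and this configuration is simply not covered. The paper makes no such reduction; it argues for all $|V(P_0)|\ge 4$ at once by casing on the number of $3$-vertices in $P_0=P-\{u,v\}$.

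Second, in your case $k\ge 2$ the bound ``at most one interior $3$-vertex is weak'' does not close the arithmetic: with $k=2$ and one weak vertex, only one vertex is guaranteed to contribute at least $\frac12$, for a total of $\frac12$, not $1$. What you need (and what the paper proves) is that \emph{none} of them can be weak: when $P_0$ contains two or more $3$-vertices and no $4^+$-vertex, each such $3$-vertex is pseudo-adjacent along $f$ to another $3$-vertex of $P_0$, whereas a vertex that is weak on $f$ must have both of its pseudo-neighbors on $f$ of degree at least $4$. Hence all $k\ge 2$ of them contribute at least $\frac12$ each (at least $\frac23$ if not slim), giving at least $1$. With the reduction dropped and ``at most one weak'' strengthened to ``none weak,'' your $k=1$ analysis coincides with the paper's; note only that your appeal to (RC6) to force the far end of the on-$f$ $1$-thread to be a $4^+$-vertex in the fat (0-thread) subcase is a misapplication, since (RC6) concerns a vertex with two $1$-threads --- though the paper's own proof glosses over this same degree condition.
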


\begin{proof}
Let $P_0=P-\{u,v\}$ and assume $|V(P_0)|\ge 4$. By (RC2), path $P_0$ contains
at least one $3^+$-vertex. If $P_0$ contains a $4^+$-vertex, we are
done. Thus we may assume that $P_0$ contains no $4^+$-vertices.  
Recall that if a $3$-vertex $v$ is weak on face $f$, then each pseudo-neighbor of $v$ that is on $f$ must be a $4^+$-vertex.
Hence, if
$P_0$ contains more than one $3$-vertex, then none of these 3-vertices can be
weak, since each one has a pseudo-neighbor on $f$ that is a $3$-vertex; 
thus $f$ gains at least $\frac23$ from each $3$-vertex, and hence gains more than 1 from $P_0$. So we assume that $P_0$ contains
exactly one $3$-vertex; call it $x$.  By (RC3), vertex $x$ splits $P_0$ into a
$2$-thread and a $1^+$-thread. By (RC5) the third thread incident to $x$
must be a $1$-thread or a $0$-thread. Therefore $x$ is either a fat
vertex or a strong vertex; hence, $f$ receives a total charge of 1 one from $x$.
\end{proof}

To prove Claims~\ref{4-segment},~\ref{clm3}, and~\ref{8-segment} we will be interested in the total charge that $f$ receives from a slim vertex and its pseudo-neighbors and the total charge that $f$ receives from a weak vertex and its pseudo-neighbors.
\begin{clm}
\label{clm3}
A face with negative charge after Phase II must satisfy the
following properties.

(C1) It contains at most one weak vertex.

(C2) It contains at most one slim vertex.

(C3) It does not contain both weak and slim vertices.

\end{clm}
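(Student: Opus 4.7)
The plan is to prove each of (C1), (C2), (C3) by contradiction: assume $f$ contains two weak vertices (resp.\ two slim vertices, or one of each) and verify directly that $\mu^{**}(f)\geq 0$. The bookkeeping is cleanest if we write
\[
\mu^{**}(f) \;=\; -4 \;+\; \sum_{v \in V(f)} c(v),
\]
where $c(v)$ denotes the net contribution of $v$ to $f$ after Phases I and II. An inventory of the rules gives $c(v)=0$ for every $2$-vertex and weak $3$-vertex, $c(v)=1/2$ for every slim $3$-vertex, $c(v)=2/3$ for every non-special $3$-vertex, and $c(v)=1$ for every $4^+$-vertex, strong $3$-vertex, and fat $3$-vertex. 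The goal in each part is thus to show $\sum_v c(v)\geq 4$.

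For (C1), each weak vertex $w_i$ sits in a cluster $x-2-w_i-2-y$ of five consecutive vertices on $f$ with $4^+$-vertices $x,y$ at the extremes; each such cluster contributes exactly $1+0+0+0+1 = 2$. I would split into three subcases by how the two clusters' $4^+$-endpoints coincide. If the four $4^+$-endpoints are all distinct, the two clusters already contribute $4$ and the remaining vertices contribute $\geq 0$. If exactly one $4^+$-endpoint is shared, the union spans $9$ consecutive vertices and contributes $3$, while the remaining arc of $f$ has $d(f)-9\geq 4$ interior vertices between two $4^+$-vertices, so Claim~\ref{4-segment} supplies the missing $\geq 1$. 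If both endpoints are shared, then $d(f)=8$, contradicting $g(G)\geq 13$.

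For (C2) and (C3) the strategy is parallel, but the analogous \emph{slim cluster} is asymmetric. Reading around $f$, a slim cluster consists of a $4^+$-vertex $x$, a thread of one or two $2$-vertices from $x$ to the slim vertex $s$, and the $3^+$-neighbor $u$ of $s$ along its $0$-thread; it spans $4$ or $5$ consecutive vertices on $f$ and contributes at least $1+1/2=3/2$. Splitting by how the clusters share $4^+$-endpoints, two slim clusters contribute at least $3$ in (C2), and a slim plus a weak cluster contribute at least $7/2$ in (C3). When the leftover arc of $f$ has $\geq 4$ interior vertices between $3^+$-vertices, Claim~\ref{4-segment} closes the gap to $4$. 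In the tight subcases where the leftover arcs are short, I would invoke the fact that an endpoint $u_i$ of a slim cluster is never a weak vertex (weak vertices are type-1 and have no $0$-thread) and is never a new slim vertex on $f$ beyond those already assumed (the only exception being $u_1=s_2$ or $u_2=s_1$, which produces extra overlap and shortens the clusters' combined span); hence the $u_i$'s contribute enough extra charge to push $\sum_v c(v)$ past $4$.

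The main obstacle is the case analysis of the tight sub-configurations in (C2) and (C3), in which the clusters overlap maximally and leave little room on $f$ for Claim~\ref{4-segment} to apply. In each such sub-configuration one must combine the reducible configurations (RC2)--(RC6) with the girth bound $g(G)\geq 13$ in order to extract the small additional contribution needed from the $u_i$'s or from a short leftover arc; this step is essentially a finite but tedious enumeration verifying that $g(G)\geq 13$ is just barely large enough.
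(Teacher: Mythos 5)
Your proposal is correct and follows essentially the same route as the paper: the paper likewise treats weak and slim vertices uniformly as ``bad'' vertices, observes that each bad vertex together with its pseudo-neighbors on $f$ contributes at least $2$ (and at least $3$ in total when two such closed pseudo-neighborhoods overlap), and then invokes Claim~\ref{4-segment} on the leftover arc to reach total contribution $4$. The only real difference is that the paper folds the $0$-thread neighbor $u$ of a slim vertex into the cluster from the outset --- using exactly your observation that $u$ cannot be weak and hence contributes at least $\frac12$ --- so every cluster is worth at least $2$ uniformly, which removes the asymmetric slim clusters and the tight subcases you defer to a ``finite but tedious enumeration.''
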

\begin{proof}
Call weak vertices and slim vertices {\it bad} vertices.  Suppose that $f$ has at least 2 bad vertices $v_1$ and $v_2$.  Note that each bad vertex and its pseudo-neighbors give at least 2 to $f$, so if the set of $v_1$ and its pseudo-neighbors is disjoint from the set of $v_2$ and its pseudo-neighbors, then $f$ receives a total of at least $2(2)=4$.  Hence, we must assume these sets are not disjoint; however, in this case $v_1$, $v_2$, and their pseudo-neighbors give $f$ a total of at least three (this case analysis is straightforward).  By Claim~2, the contribution from the remaining vertices is at least 1 (we need to verify that these vertices contain a path of at least four vertices, but this is again straightforward).
\end{proof}

Similar to Claim~\ref{4-segment}, we have the following claim.

\begin{clm}\label{8-segment}
If a face $f$ contains a path $P=u\ldots v$ with $d(u),d(v)\ge 3$ and $|V(P)\setminus\{u,v\}|\ge 8$, then either $P$ contains a slim vertex or else $f$ receives a total charge of at least 2 from the vertices of $P\setminus\{u,v\}$.
\end{clm}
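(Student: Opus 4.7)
The plan is to extend the strategy of Claim~\ref{4-segment} by splitting into cases according to whether the interior $P_0 := V(P)\setminus\{u,v\}$ (of size $m \ge 8$) contains a $4^+$-vertex. Throughout, we may assume $P$ contains no slim vertex, since otherwise the claim is immediate; the goal is then to show $f$ receives at least $2$ in total charge from the vertices of $P_0$.

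First suppose $P_0$ contains a $4^+$-vertex $w$. Write $P = u\,x_1\cdots x_m\,v$ with $w = x_j$. As a $4^+$-vertex, $w$ alone contributes $1$ to $f$. Deleting $w$ from $P_0$ splits the rest into two sub-interiors of sizes $j-1$ and $m-j$ summing to $m-1 \ge 7$, so at least one of them has at least four vertices. Applying Claim~\ref{4-segment} to the corresponding sub-path of $P$, which runs between one of $u,v$ and $w$ (both of degree $\ge 3$), yields an additional charge of at least $1$ to $f$. Adding the two contributions gives the required $\ge 2$.

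Now suppose $P_0$ contains no $4^+$-vertex, so every vertex of $P_0$ has degree $2$ or $3$. Let $k$ be the number of $3$-vertices in $P_0$. By (RC4), each thread inside $P_0$ joining two $3$-vertices carries at most one interior $2$-vertex, and by (RC3), each of the two outer threads (from $u$ and from $v$ to the nearest $3$-vertex of $P_0$) carries at most two interior $2$-vertices. Hence the number of $2$-vertices in $P_0$ is at most $(k-1) + 4 = k+3$, and combined with $m \ge 8$ this forces $k \ge 3$. Since no $3$-vertex of $P_0$ is slim, it will suffice to rule out weak $3$-vertices in $P_0$: every remaining $3$-vertex (non-special, strong, or fat) contributes at least $\tfrac{2}{3}$ to $f$, so $k\cdot\tfrac{2}{3}\ge 2$ closes Case~2.

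The delicate step---and the main obstacle---is showing that $P_0$ contains no weak vertex once $P_0$ has no $4^+$-vertex. If some $x_j \in P_0$ were weak on $f$, then the two pseudo-neighbors of $x_j$ on $f$, reached along its two incident $1$-threads, would both have to be $4^+$-vertices. Tracing around the face, these pseudo-neighbors are $x_{j-2}$ and $x_{j+2}$ (with the conventions $x_0 = u$ and $x_{m+1} = v$). Since $P_0$ contains no $4^+$-vertex, both pseudo-neighbors must lie outside $P_0$, which forces $j-2 = 0$ and $j+2 = m+1$ simultaneously; but then $m = 3$, contradicting $m \ge 8$. Hence no weak vertex exists in $P_0$, the $\tfrac{2}{3}$-per-$3$-vertex bookkeeping of Case~2 closes, and the proof is complete.
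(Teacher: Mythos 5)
Your proof is correct and follows essentially the same approach as the paper: case on whether $P_0$ contains a $4^+$-vertex, invoke Claim~\ref{4-segment} on a sub-path ending at that vertex, and otherwise count threads to force at least three $3$-vertices, none of which can be weak. Your only departures are cosmetic --- you merge the paper's ``one $4^+$-vertex'' and ``two $4^+$-vertices'' sub-cases by always applying Claim~\ref{4-segment} to the longer side, and you rule out weak vertices by a positional argument rather than the paper's observation that each such $3$-vertex is pseudo-adjacent on $f$ to another $3$-vertex.
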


\begin{proof}
Let $P_0=P\setminus\{u,v\}$ and assume $|P_0|\geq 8$.  If $P_0$ contains at least two $4^+$-vertices, then $f$ receives total charge 1 from each of these $4^+$-vertices, and hence receives total charge at least 2 from the vertices of $P_0$.
If $P_0$ contains a single $4^+$-vertex, then call the $4^+$-vertex $y$; note that $P_0$ contains a path $P_1$ with at least 5 vertices, such that one endpoint is $y$ and the other endpoint is adjacent to either $u$ or $v$.  Clearly $f$ receives charge 1 from $y$, and by Claim~\ref{4-segment}, $f$ receives charge at least 1 from the vertices of $P_1-y$; hence, $f$ receives a total charge of 2.

Assume instead that $P_0$ contains no $4^+$-vertices. Note that by (RC2),
path $P_0$ must contain at least two $3$-vertices. If $P_0$ contains at least
three $3$-vertices, then none of them can be weak, since each is pseudo-adjacent to a $3$-vertex.  If also none is slim, then $f$ receives at least $3(\frac23)$ from $P_0$; if $P_0$ contains a slim vertex, then the claim holds.
Finally, if $P_0$ contains at most two $3$-vertices, then by (RC3) and (RC4), it
contains at most two $2$-threads and one $1$-thread; thus
$|V(P_0)|\le 2(2) + 1(1) + 2 =7$, which is a contradiction.
\end{proof}

\begin{clm}
A face with negative charge after phase II must contain no slim vertex and no weak vertex.
\end{clm}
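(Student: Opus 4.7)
The plan is to proceed by contradiction: assume $f$ has negative charge after Phase~II and contains a weak or slim vertex, and derive $\mu^{**}(f)\ge 0$. By Claim~\ref{clm3}, $f$ has at most one weak vertex, at most one slim vertex, and never both, so only two cases must be handled. In each case the strategy is to locate the bad vertex together with its immediate pseudo-neighbors on $f$, account for their combined contribution to the formula
\[
\mu^{**}(f)=\frac{2}{3}t_3'(f)+t_4-4+0\cdot\#(weak)+\frac{1}{2}\cdot\#(slim)+1\cdot\#(strong)+1\cdot\#(fat),
\]
and then apply Claim~\ref{8-segment} to the complementary path along $f$ to pick up the rest.

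In the weak-vertex case, assume $f$ contains a weak vertex $u$ (so by Claim~\ref{clm3} no slim vertex). By the definition of weak, the five consecutive vertices of $f$ around $u$ form the pattern $a,z_1,u,z_2,b$ with $z_1,z_2$ of degree $2$ and $d(a),d(b)\ge 4$ (and $a\ne b$ by the girth bound). Their combined contribution to $\mu^{**}(f)+4$ is $1+0+0+0+1=2$. The complementary path $P$ of $f$ from $a$ to $b$ not through $u$ has $d(f)-5\ge 8$ interior vertices, and since $f$ has no slim vertex, neither does $P$; hence Claim~\ref{8-segment} gives a contribution of at least $2$ from the interior of $P$. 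Summing yields $\mu^{**}(f)\ge 2+2-4=0$, a contradiction.

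The slim-vertex case is the main obstacle, because the bound is tight. Assume $f$ contains a slim vertex $u$ (so by Claim~\ref{clm3} no weak vertex). On $f$ around $u$ one sees the sequence $w,u,\ldots,a$, where $w$ is the other endpoint of $u$'s $0$-thread (so $d(w)\ge 3$), the ``$\ldots$'' is the interior of a $1$-thread or $2$-thread on $f$, and $d(a)\ge 4$; note $a\ne w$ since $g(G)\ge 13$. The contributions from $u$ (slim, $\frac{1}{2}$), the intermediate $2$-vertices ($0$ each), and $a$ ($1$) total $\frac{3}{2}$. The crucial step is to lower-bound $w$'s own contribution: since the edge $uw$ is a $0$-thread, one of $w$'s three incident threads is already a $0$-thread, so $w$ cannot be weak (weakness requires one $2$-thread and two $1$-threads and leaves no room for a $0$-thread), and by Claim~\ref{clm3} $w$ cannot be a second slim vertex on $f$. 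Hence $w$ is either a $4^+$-vertex or a non-special, strong, or fat $3$-vertex, and in every case contributes at least $\frac{2}{3}$. The path $P$ along $f$ from $a$ to $w$ avoiding $u$ has at least $d(f)-5\ge 8$ interior vertices and contains no slim vertex (the only candidate $u$ is off $P$), so Claim~\ref{8-segment} gives an interior contribution of at least $2$. Combining, $\mu^{**}(f)\ge \frac{3}{2}+\frac{2}{3}+2-4=\frac{1}{6}>0$, again contradicting the assumption and completing the proof.
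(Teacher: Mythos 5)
Your proof is correct and follows essentially the same route as the paper: reduce to a single weak or slim vertex via Claim~\ref{clm3}, count the charge from that vertex and its pseudo-neighbors on $f$, and apply Claim~\ref{8-segment} to the complementary path. The only difference is cosmetic bookkeeping in the slim case, where you bound the $0$-thread neighbor's contribution by $\frac23$ (ruling out both weak and slim) and get $\frac16>0$, whereas the paper uses the bound $\frac12$ and lands exactly at $0$; both suffice.
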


\begin{proof}
Assume that a face $f$ contains exactly one vertex that is weak or slim;
call this vertex $y$. Let the pseudo-neighbors of $y$ on $f$ be $v_1$ and $v_2$.  
There is a path $P$ in $f$ from $v_1$ to $v_2$ such that $y\not\in
P$, and $|V(P)\setminus\{v_1,v_2\}|\ge 8$.  Note that $d(v_1), d(v_2)\ge 3$.
Furthermore, since $y$ is weak or slim, $V(P)\setminus\{v_1,v_2\}$ cannot contain a slim vertex; hence, by Claim~\ref{8-segment}, $f$ receives total charge at least two from $P\setminus\{v_1,v_2\}$. 

If $y$ is weak, then $f$ receives $1+0+1$ from $v_1,y,v_2$; similarly, if $y$ is slim, then $f$ receives at least $1+\frac12+\frac12$ from $v_1,y,v_2$.  In each case, 
we see that $\mu^*(f)\ge -4 + 2 + 2 = 0$; this is a contradiction.
\end{proof}

This completes the proof of Theorem~\ref{girththm}(b).
\end{proof}

\medskip

{\bf Acknowledgement:} The authors would like to thank Professor Alexandr Kostochka for his valuable comments, and Professor Rong Luo for the references on Class 2 graphs.   We also appreciate the valuable comments from referees. 

%\bibliographystyle{plain}
%\bibliographystyle{is-plain}
%\bibliographystyle{elsarticle-harv}
%\bibliography{/users/gexinyu/Desktop/RESEARCH/GYU}

%\end{document}

\end{document}